\newcolumntype{C}{>{\centering\arraybackslash}X}
\numberwithin{equation}{section}
\theoremstyle{plain}
\newtheorem{theorem}{Theorem}
\newtheorem{example}{Example}
\newtheorem{lemma}{Lemma}
\newtheorem{remark}{Remark}
\newtheorem{corollary}{Corollary}
\newtheorem{definition}{Definition}
\def \bE {\mathbb{E}}
\def \bR {\mathbb{R}}
\def \var {\mathsf{Var}}
\newcolumntype{C}{>{\centering\arraybackslash}X}
\newcommand{\tabincell}[2]{\begin{tabular}{@{}#1@{}}#2\end{tabular}}
\newcommand{\supp}{\mathrm{supp}}
\newcommand{\murej}{\hat{\mu}_{\mathrm{REJ}}}
\newcommand{\muimh}{\hat{\mu}_{\mathrm{IMH}}}
\newcommand{\Prob}{\mathbb{P}}
\newcommand{\TV}{{\sf TV}}
\newcommand{\Unif}{\mathsf{Unif}}
\definecolor{myblue}{rgb}{.8, .8, 1}
\definecolor{mathblue}{rgb}{0.2472, 0.24, 0.6} 
\definecolor{mathred}{rgb}{0.6, 0.24, 0.442893}
\definecolor{mathyellow}{rgb}{0.6, 0.547014, 0.24}
\newcommand{\calX}{{\mathcal{X}}}
\newcommand{\dimh}{d_{\mathrm{IMH}}}
\crefname{lemma}{Lemma}{Lemmas}
\Crefname{lemma}{Lemma}{Lemmas}
\crefname{thm}{Theorem}{Theorems}
\Crefname{thm}{Theorem}{Theorems}
\DeclareMathOperator*{\arginf}{arg\,inf}
\theoremstyle{remark}
\newtheorem*{fact}{Fact}
\begin{document}

\begin{frontmatter}
\title{Exact Convergence Analysis of the Independent Metropolis-Hastings Algorithms}
\runtitle{Exact Convergence Analysis of the IMH Algorithms}

\begin{aug}
	\author{\fnms{Guanyang } \snm{Wang}\ead[label=e1]{guanyang.wang@rutgers.edu}}
	\address{Department of Statistics, Rutgers University,
		\printead{e1}}
\end{aug}

\begin{abstract}
A well-known difficult problem regarding Metropolis-Hastings algorithms is to get sharp bounds on their convergence rates. Moreover, a fundamental but often overlooked problem in Markov chain theory is to study the convergence rates for different initializations. In this paper, we study the two issues mentioned above of the Independent Metropolis-Hastings  (IMH)  algorithms on both general and discrete state spaces. We derive the exact convergence rate and prove that the IMH algorithm's different deterministic initializations have the same convergence rate. We get the exact convergence speed for IMH algorithms on general state spaces.
\end{abstract}

\begin{keyword}[class=MSC2010]
	\kwd[Primary ]{60J22}
	\kwd{62D05}
	\kwd[; secondary ]{62D05}
\end{keyword}

\begin{keyword}
	\kwd{Independent Metropolis-Hastings }
	\kwd{Markov chain Monte Carlo}
	\kwd{exact convergence rate}
\end{keyword}

\end{frontmatter}

\section{Introduction}\label{sec:introduction}

Markov chain Monte Carlo (MCMC) methods, such as the Metropolis-Hastings algorithms \cite{metropolis1953equation} \cite{hastings1970monte}, the Gibbs sampler \cite{geman1984stochastic} \cite{gelfand1990sampling}, have revolutionized  statistics, especially statistical computing and Bayesian methodology. When facing a complicated probability distribution, no matter discrete or continuous, low or high dimensional, MCMC methods provide  powerful simulation tools to draw samples from the target distribution. Mathematically speaking, given a target distribution $\pi(x)$, the MCMC algorithm generates a Markov chain $\Phi \doteq \{\Phi_1, \Phi_2, \cdots\}$ whose stationary distribution is designed to be $\pi$. Under mild conditions, classical Markov chain theory guarantees that the Markov chain $\Phi$ will converge to its stationary distribution when the chain runs long enough. 

However, the convergence theorem mentioned above  does not give any   quantitative convergence rate results. Therefore, a central part of the MCMC theory is to establish convergence rate analysis for MCMC algorithms. Unfortunately, getting sharp bounds for the convergence rates is known to be a formidable challenge  in Markov chain theory, especially for general state space Markov chains. Consider a Markov chain with stationary distribution $\pi(x)$ in $\bR^d$, most of the existing theoretical works are established under the notion of geometric ergodicity. Roughly speaking, a Markov chain with transition kernel $P$ is said to be geometrically ergodic if the following inequality holds for $\pi$-almost all $x$:
\begin{equation}\label{eqn:geometric ergodic}
\|P^n(x,\cdot) - \pi(\cdot)\| \leq C(x) r^n,
\end{equation}
where $r$ is a constant between $0$ and $1$, $\| \cdot \|$ is some distance metric between two probability measures, usually taken as the total variation (TV) distance.

Formula \eqref{eqn:geometric ergodic} guarantees that, for almost all initializations, the convergence rate  can be uniformly upper bounded by $r$. However, one can ask several natural questions regarding  Formula \eqref{eqn:geometric ergodic}:

\begin{itemize}
	\item  (Q1) \textit{How to get sharp convergence rate $r$  of inequality  \eqref{eqn:geometric ergodic}?} In practice, geometric ergodicity is often established under the `drift-and-minorization' framework. But the quantitative bound $r$ given by this framework is often very conservative and thus far from the exact convergence rate. For example, there exist a few cases where sharp rates of convergence can be derived, and the resulting bounds are usually much sharper than those based on `drift-and-minorization'.
	We refer the interested readers to the paper by Diaconis et al. \cite{diaconis2008gibbs}, the discussion on the previous paper by Jones and Johnson \cite{jones2008comment}, and the rejoinder \cite{diaconis2008}  for a concrete example. See also the recent work by Qin and Hobert \cite{qin2020limitations} for discussions on the limitations of the `drift-and-minorization' framework.
	
	\item  (Q2)  \textit{Does every point $x$ have the same convergence rate?} From the expression of inequality \eqref{eqn:geometric ergodic}, it is tempting to believe that all the points have the same convergence rate. However, as we will see shortly, this is not true in general.  Therefore, even if one could get a `sharp rate' for \eqref{eqn:geometric ergodic}, this still only tells the slowest convergence rate among all the initializations.  Suppose there exists a Markov chain, such that the convergence rate at one point (say $x_1$)  equals $0.001$, while  the convergence rate at another point (say $x_2$)  equals $0.999$. Then the bound $r$ given by  \eqref{eqn:geometric ergodic} would be practically useless when one starts the chain at $x_1$. Therefore, a more precise but  natural question would be inquiring about the convergence rates $r(x)$ for different initializations. 
\end{itemize}

We believe both Q1 and Q2 would be of both theorist and practitioner's interest. First of all, most of the present MCMC convergence theories focus on the upper bound of the convergence speed. Thus it is natural to ask for a sharp upper bound, or a useful lower bound. Secondly, an a priori bound for the convergence rate would be very helpful for users to evaluate the computation cost, or choose a fast MCMC algorithm among many candidates before implementing them all. Finally, but perhaps more importantly, the convergence speed analysis for different initializations is an interesting, important but overlooked question from both a mathematical and an algorithmic point of view. Mathematically, studying convergence rates for different initializations is quite natural after studying the worst-case convergence rates. Algorithmically, convergence analysis for different initializations   provides guidance for MCMC users directly, as a carefully chosen initialization can significantly speed up the algorithm. Unfortunately, it seems Q2 has not been seriously discussed and studied before (perhaps except the very recent work by  Lubetzky and Sly \cite{lubetzky2020fast} in the context of Ising Models).

However,  Q1 and Q2 are known to be technically  challenging. Sharp convergence rate analysis is  rare for MCMC algorithms on general state spaces, even for toy models. This paper provides precise answers to Q1 and Q2 in the context of the Independent Metropolis-Hastings (IMH) algorithm, a  widely-used class of  MCMC methods, on both discrete and general state spaces. We get the exact convergence rate of the IMH chain, and we show different initializations have the same convergence rate. Moreover, we derive the exact non-asymptotic convergence speed (in addition to the exact rate) for IMH algorithms on general state spaces. To the author's best knowledge, this is the first `exact convergence analysis' for certain MCMC algorithms on general state spaces, where the upper bound matches perfectly with the lower bound.

\subsection{Our contributions}\label{subsec:contribution}
In this paper, we consider  the IMH chain on general and discrete state space separately. Our target is to answer the questions Q1 and Q2 proposed in Section \ref{sec:introduction}. Our results are  summarized in
Table \ref{tab:summary_result}, and briefly explained below.

\begin{table}[htbp!]\caption{Summary of our convergence results for the IMH algorithms }\label{tab:summary_result}
	\begin{tabular}{c|c|c|c}
		\hline
		State Space $\mathcal X$ & \tabincell{c}{(Exact) Convergence \\Rate}  &\tabincell{c}{ (Exact) Convergence \\Speed} & \tabincell{c}{Convergence Rate for\\ Different Initializations} \\
		\hline
		Continuous               &  $1 - \frac 1 {w^\star}$  (Thm \ref{thm: exact convergence speed})& $(1 - \frac 1 {w^\star})^n$ (Thm \ref{thm: exact convergence speed})                    & All equals $1 - \frac 1 {w^\star}$ (Thm \ref{thm: every point convergence rate})            \\
		\hline
		Discrete                 &  $1 - \frac 1 {w^\star}$(Thm \ref{thm: exact convergence rate, discrete}) & \tabincell{c}{ $[c_1 (1- \frac 1{w^\star})^n ,  c_2(1- \frac 1{w^\star})^n]$\\ (Thm \ref{thm: exact convergence rate, discrete}) }                      & All equals $1 - \frac 1 {w^\star} $ (Thm \ref{thm: every point convergence, discrete})   \\       
		\hline  
	\end{tabular}
\end{table}

Let $\pi$ and $p$ be the probability density function (or the probability mass function is the state space is discrete) of the target distribution and the proposal distribution, respectively.  We assume the support of $\pi$ is contained in that of $p$, and the chain is always initialized from the support of $\pi$. We also assume the ratio between the target and proposal distribution is uniformly upper bounded, i.e., $w^\star \doteq \sup_x\frac{\pi(x)}{p(x)} < \infty$. Our results include: 
\begin{enumerate}
	\item  (Q1) Exact convergence rate analysis for the IMH algorithm:
	\begin{itemize}
		\item For the IMH algorithm on general state spaces, we prove the following `exact convergence' result:
		\begin{equation}\label{eqn:exact continuous state}
		\sup_{x\in\supp(\pi)} \|P^n(x,\cdot) - \pi\|_\TV   = (1 - \frac{1}{w^\star}) ^n.
		\end{equation}
	It is worth mentioning that formula \eqref{eqn:exact continuous state} completely characterizes the worst-case convergence speed for the IMH chain.
		\item For the IMH algorithm on discrete state spaces, we prove that:
		\begin{equation}\label{eqn:exact discrete state}
		c_1(1 - \frac{1}{w^\star}) ^n \leq  \sup_{x\in\supp(\pi)} \|P^n(x,\cdot) - \pi\|_\TV   \leq c_2(1 - \frac{1}{w^\star}) ^n,
		\end{equation}
		where $0 < c_1 \leq c_2 \leq1$ are two computable constants which will be defined in Section \ref{sec: IMH convergence, discrete}. 
	\end{itemize}
	\item(Q2) Convergence rate analysis with different initializations:
	\begin{itemize}
		\item For the IMH algorithm on both general and discrete state spaces, we prove that $P^n(x,\cdot)$ converges to $\pi$ at the same rate ($1 - \frac 1{w^\star}$) for all $x \in \mathcal X$ under certain conditions.
	\end{itemize}
\end{enumerate}

Theorem \ref{thm: exact convergence speed} gives a simple, clean formula for the exact speed of convergence, with the lower bound matches perfectly with the upper bound. Theorem \ref{thm: exact convergence rate, discrete} is the discrete variant of Theorem \ref{thm: exact convergence speed}. Interestingly, the exact result in Theorem \ref{thm: exact convergence speed} for general state space does not hold anymore when the state space is discrete, though the convergence rate is still the same. Theorem \ref{thm: every point convergence rate} and Theorem \ref{thm: every point convergence, discrete} are also somewhat surprising, as they show the IMH chain converges at the same rate for  arbitrary initializations, which gives  precise quantitative results on  convergence rates.

\subsection{Organization}\label{subsec: organization}
The rest of this paper is organized as follows. In Section \ref{sec:preliminaries}, we introduce our notations and review previous works. Section \ref{sec: IMH general} analyzes the IMH chain on general state spaces. Section \ref{sec: IMH convergence, discrete} analyzes the IMH chain on discrete state spaces. Some connections with the rejection sampler and the coupling from the past algorithms are discussed in Section \ref{sec: connections}.

\section{Preliminaries}\label{sec:preliminaries}

\subsection{The Metropolis-Hastings algorithm}
Let $(\mathcal X,\mathcal F, \pi) $ be a Polish space, equipped with $\sigma$-algebra $\mathcal F$ and a probability measure $\pi$. In practice, 
$\mathcal X$ usually stands for one of the following four possibilities:

\begin{enumerate}
	\item  \label{case: finite state} $\mathcal X$ is a  finite set. For example, $\mathcal X = \{x_1, x_2, \cdots, x_n\}$. 
	\item  \label{case: countable state} $\mathcal X$ is a countably infinite set. For example, $\mathcal X = \mathbb N$. 
	\item \label{case: Euclidean space} $\mathcal X$ is the Euclidean space, i.e., $\mathcal X = \mathbb \bR$ or $\bR^d$.
	\item \label{case: subset } $\mathcal X$ is a subset of the Euclidean space. For example, $\mathcal X = \bR^+$ or $\mathcal X = \{x\in \bR^d: \|x\|\leq c\}$ for some constant $c$.
\end{enumerate}
When $\mathcal X$ is a finite or countably infinite set, $\mathcal F$ is usually taken as $2^\mathcal{X}$ and thus all the subsets of $\mathcal X$ are measurable.  When $\mathcal X$ is the Euclidean space or a subset of the Euclidean space, $\mathcal F$ is usually taken as the Borel $\sigma$-algebra. The four possibilities of $\mathcal X$   cover almost all the practical applications of our interest. Therefore, without further specification, we shall assume the measurable space $\mathcal X$ falls into one of the four categories mentioned above. Throughout this paper,  $\mathcal X$ is called `state space'  as it  serves as the state space of the Markov chains. Naturally, we say `discrete state space' when $\mathcal X$ is finite or countably infinite, and `continuous state space' or `general state space' when $\mathcal X$ is the Euclidean space or a subset of the Euclidean space. In particular, when $\mathcal X$ is a general state space, we further assume the probability measure $\pi$ has a density on $\mathcal X$. We slightly abuse the notation and use $\pi$ to denote both the probability mass function (pmf) on discrete state spaces and the probability density function (pdf) on general state spaces. 

Given a target  distribution $\pi$, the Metropolis-Hastings (MH) algorithm is designed to draw samples from  $\pi$ approximately. The details of the Metropolis-Hastings algorithm are described in Algorithm \ref{alg:MHMC}.

\begin{algorithm}
	\caption{Metropolis-Hastings Algorithm (MH)}\label{alg:MHMC}
	\hspace*{\algorithmicindent} \textbf{Input:} initial setting $x$, number of iterations $T$, Markov transition kernel $q$ \\
	
	\begin{algorithmic}[1]
		
		\For{$t= 1,\cdots T$}
		\State Propose $x'\sim q(x,x')$
		\State Compute $a = \frac{q(x',x)\pi(x')}{q(x,x')\pi(x)} $
		\State Draw $r \sim \Unif[0,1]$
		\State \textbf{If} $(r< a)$ \textbf{then} set $x =x'$. \textbf{Otherwise}, leave $x$ unchanged.
		\EndFor
	\end{algorithmic}
\end{algorithm}

Algorithm \ref{alg:MHMC} is arguably the most popular class of Markov chain Monte Carlo (MCMC) methods. It is first proposed by   Metropolis et al.   \cite{metropolis1953equation} in 1953 to simulate a liquid in equilibrium with its gas phase. Hastings generalized the algorithm in 1970 \cite{hastings1970monte} and thus simulations following this scheme are called the Metropolis-Hastings algorithm. The MCMC algorithm can be implemented without the knowledge of the normalizing constant of the target distribution. This appealing feature makes MCMC methods very popular in many applications such as Bayesian inference.

To implement Algorithm \ref{alg:MHMC}, one needs to specify a  transition kernel $q$. Some common choices include the independent proposal,  the random-walk proposal, the Metropolis-adjusted Langevin algorithm (MALA). An MH algorithm using an independent proposal is called the Independent Metropolis-Hastings (IMH) algorithm, which means the proposal kernel  $q(x,y) = p(y)$ is independent of the current position.
The IMH algorithm is one of the most commonly used MCMC algorithms. Some modern variants and applications of IMH algorithm include the Adaptive IMH \cite{holden2009adaptive} and the Particle IMH \cite{andrieu2010particle}, \cite{middleton2019unbiased}. Moreover, IMH algorithms are  routinely used as a component of  auxiliary Monte Carlo methods, such as the Pseudo-marginal Monte Carlo sampler \cite{andrieu2009pseudo}, \cite{andrieu2015convergence}.

\subsection{Markov chain convergence}
Let $\Phi = \{\Phi_1, \Phi_2, \cdots \}$ be an aperiodic, irreducible Markov chain on state space $\mathcal X$ with transition kernel $P$. In the general state space setting, it is further assumed that $P(x,\cdot)$ is absolutely continuous with density $p(x,y)$ with respect to the Lebesgue measure, except perhaps for an atom $P(x,\{x\}) > 0$.

Standard Markov chain theory (see, for example, \cite{meyn2012markov} Chapter 13) guarantees that $\rVert P(x,\cdot) -\pi\lVert_\TV \rightarrow 0$ as $n\rightarrow \infty$ for $\pi$-almost everywhere $x\in \mathcal X$. Furthermore, a Markov chain $\Phi$  with transition kernel $P$ is said to be uniformly ergodic if 
\begin{definition}[Uniformly Ergodic]\label{def:uniform ergodic}
	\begin{equation}\label{eqn: uniformly ergodic}
	\sup_x \|P^n(x,\cdot) - \pi\|_\TV \leq C r^n
	\end{equation}
	for $C > 0$ and $0 < r < 1$.
\end{definition}

When $\mathcal X$ is a continuous, unbounded space, then a Markov chain  often fails to be uniformly ergodic. Therefore, the following concept `geometrically ergodic' is needed.  $\Phi$ is said to be geometrically ergodic if
\begin{definition}[Geometrically Ergodic]\label{def:geometrically ergodic}
	\begin{equation}\label{eqn:geometrically ergodic}
	\|P^n(x,\cdot) - \pi\|_\TV \leq C(x) r^n
	\end{equation}
	for some $0<r<1$, and $\pi$-almost all $x\in \mathcal X$.
\end{definition}
Both uniform and geometric ergodicity guarantee the Markov chain converges to its stationary distribution at a geometric rate for $\pi$-almost all initializations. However, it is a well-known difficult problem to give sharp bounds on the right-hand side of \eqref{eqn: uniformly ergodic} and \eqref{eqn:geometrically ergodic}.  In the next section, we will review some existing works on bounding the convergence rates of MH algorithms.
\subsection{Related works}
There are numerous studies concerning the convergence rates of Markov chains. The techniques for bounding convergence rates are significantly different between general and discrete state spaces.

When $\mathcal X$ is finite (but possibly very large), it is clear from Definition \ref{def:uniform ergodic} and \ref{def:geometrically ergodic} that every geometrically ergodic chain must be uniformly ergodic, as we could take the supremum of $C(x)$ in formula \eqref{eqn:geometrically ergodic} over the state space $\mathcal X$. There are many techniques,  including eigenanalysis, group theory, geometric inequalities, Fourier analysis, multicommodity flows, to get upper and lower bounds of the convergence rates.  These techniques can sometimes turn into very sharp bounds of \eqref{eqn: uniformly ergodic}, see Bayer and Diaconis \cite{bayer1992trailing}, Diaconis and Saloff-Coste \cite{diaconis1993comparison}\cite{diaconis1996logarithmic}, Diaconis and Shahshahani\cite{diaconis1981generating}, Liu \cite{liu1996metropolized} for inspiring examples. 
For general state space Markov chains,  uniform ergodicity is often `too good to be true' when the state space is  unbounded (see Theorem 3.1  of Mengersen and Tweedie \cite{mengersen1996rates}). Even geometrically ergodic is not at all trivially satisfied by usual Markov chains. General conditions for geometric ergodicity are established in the seminal works of Rosenthal \cite{rosenthal1995minorization}, Mengersen and Tweedie \cite{mengersen1996rates}, and Roberts and Tweedie \cite{roberts1996geometric}. More refined estimates for explicit $C(x)$ and $r$ are often referred to as `honest bounds' and are mostly developed under the `drift-and-minorization' framework  by Rosenthal \cite{rosenthal1995minorization}.  However, choosing a suitable `drift function' is often difficult in practical applications, and the estimates given by the `drift-and-minorization' framework are usually very conservative. Therefore, natural questions like Q1 and Q2 mentioned in the introduction are known to be challenging. For more about the convergence of  Markov chains on general state spaces, we refer the readers to an excellent survey by Roberts and Rosenthal \cite{roberts2004general}.

For IMH algorithms,  Liu \cite{liu1996metropolized} has computed  the eigenvalues and eigenvectors of the Markov transition matrix on a discrete state space. For  IMH algorithms on general state spaces, Mengersen and Tweedie have shown that uniform and geometric ergodicity are equivalent.  They have also provided the necessary and sufficient condition for the IMH algorithm being uniformly ergodic in  Theorem 2.1 of \cite{mengersen1996rates}. The exact transition probabilities for the IMH algorithm are derived by Smith and Tierney \cite{smith1996exact}. The whole spectrum of the IMH algorithm is derived by Atchad\'{e} and Perron \cite{atchade2007geometric}. Quantitative convergence bounds for non-geometrically ergodic IMH algorithms is studied by Roberts and Rosenthal  \cite{roberts2011quantitative}. The optimal scaling of the IMH sampler is studied by Lee and Neal \cite{lee2018optimal}.

\section{IMH on general state spaces}\label{sec: IMH general}
Now we are ready to state and prove our main results. Suppose one wishes to sample from a probability distribution with density $\pi$ on the general state space $\mathcal X$.  Let $p$ be a probability density function that we can sample from. Throughout this section, we assume both $p$ and $\pi$ are continuous. As mentioned in Section \ref{subsec:contribution}, we assume $\supp(\pi) \subset \supp(p)$ and the chain is initialized from $\supp(\pi)$.

The IMH algorithm is defined as follows: 

\begin{algorithm}
	\caption{Independent Metropolis-Hastings Algorithm (IMH)}\label{alg:IMH}
	\hspace*{\algorithmicindent} \textbf{Input:} initial setting $x$, number of iterations $T$, proposal distribution $p$ \\
	
	\begin{algorithmic}[1]
		
		\For{$t= 1,\cdots T$}
		\State Propose $x'\sim p(x')$
		\State Compute $a(x,x') = \frac{p(x)\pi(x')}{p(x')\pi(x)} $
		\State Draw $r \sim \Unif[0,1]$
		\State \textbf{If} $(r< a)$ \textbf{then} set $x =x'$. \textbf{Otherwise}, leave $x$ unchanged.
		\EndFor
	\end{algorithmic}
\end{algorithm}
This section is divided into two parts. In Section \ref{subsec: exact convergence speed}, we give the exact convergence speed of the IMH algorithm, which answers Q1 proposed in Section \ref{sec:introduction}. In Section \ref{subsec: converge every point}, we provide the answer of Q2 for the IMH algorithm.

\subsection{Exact convergence speed}\label{subsec: exact convergence speed}
Let $w$ be the ratio betwen densities $\pi$ and $p$, i.e.,
$w(x)  = \frac{\pi(x)}{p(x)}$. Then it is clear that the acceptance ratio $a$ can be written as 
$
a(x, x') = \frac{w(x')}{w(x)}.
$
The following theorem is proved by Mengersen and Tweedie, which gives the necessary and sufficient conditions for IMH being geometrically ergodic. 

\begin{theorem}[Mengersen and Tweedie \cite{mengersen1996rates}, Theorem 2.1]\label{thm: Mengersen Independence}
	With all the notations as above,  we have
	\begin{enumerate}
		\item Let $\Phi \doteq \{\Phi_1, \Phi_2, \cdots\}$ be the Markov chain associated with the IMH algorithms, with target density $\pi$ and proposal density $p$. Then the followings are equivalent:
		\begin{itemize}
			\item $\Phi$ is uniformly ergodic,
			\item $\Phi$ is geometrically ergodic,
			\item Function $w(x)$ is unformly bounded, i.e., there exists $M > 0$ such that $w(x) \leq M $ for all $x \in \mathcal X$.
		\end{itemize}
		\item Suppose $w(x) \leq M $ for all $x$ , then for any $n\geq 1$, we have
		\begin{equation}\label{eqn:imh_upperbound}
		\|P^n(x,\cdot) - \pi\|_\TV \leq (1- \frac 1M)^n
		\end{equation}
			for every $x$.
	\end{enumerate}
\end{theorem}

Theorem \ref{thm: Mengersen Independence} shows that $w$ being uniformly upper bounded is the necessary and sufficient condition for the IMH algorithm being geometrically ergodic. Therefore, we  assume $w^\star \doteq \sup_x w(x) < \infty$ henceforth. 
 In particular, uniformly ergodic and geometrically ergodic are equivalent for the IMH chain. It is therefore convenient to define:
\begin{definition}[Maximal Variation Distance]\label{def:total_variation_kernel}
	\[\dimh(t) \doteq  \sup_{x \in \supp(\pi)  } \|P^t(x,\cdot) - \pi\|_\TV.
	\]
\end{definition}
The maximal variation distance  $\dimh(t)$ describes the distance to the equilibrium distribution in the worst-case scenario. For the IMH chain, an immediate corollary for Theorem \ref{thm: Mengersen Independence} is:
\begin{corollary}\label{cor: IMH upper bound}
For any $n\geq 1$, we have
	\begin{equation}\label{eqn:imh_upperbound}
	\dimh(n) \leq (1- \frac 1{w^\star})^n.
	\end{equation} 
\end{corollary}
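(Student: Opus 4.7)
The plan is to derive the inequality as a direct consequence of Part 2 of Theorem \ref{thm: Mengerson Independence} by substituting $M = w^\star$ and then taking a supremum over $x \in \mathcal X$; no new machinery is required, and the chain of reasoning is literally two lines.

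First I would split into the trivial and non-trivial cases based on whether $w^\star$ is finite. If $w^\star = \infty$, then $1 - 1/w^\star = 1$ and the claimed bound $\dimh(n) \leq 1$ is automatic, since the total variation distance between any two probability measures is at most one. So the substantive case is $w^\star < \infty$. In that case, the very definition $w^\star = \sup_{x \in \mathcal X} w(x)$ yields $w(x) \leq w^\star$ for every $x \in \mathcal X$, which is exactly the hypothesis of Part 2 of Theorem \ref{thm: Mengerson Independence} with $M = w^\star$. Applying that theorem gives the pointwise bound
$$\|P^n(x,\cdot) - \pi\|_\TV \leq \left(1 - \frac{1}{w^\star}\right)^n \qquad \text{for every } x \in \mathcal X.$$
The right-hand side does not depend on $x$, so taking the supremum over $x$ and invoking Definition \ref{def:total_variation_kernel} yields $\dimh(n) \leq (1 - 1/w^\star)^n$, which is the claim.

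A small sanity check worth mentioning is that one always has $w^\star \geq 1$, since otherwise $\int \pi = \int w\, p < \int p = 1$ would contradict that $\pi$ is a probability density. Thus $1 - 1/w^\star \in [0,1)$ whenever $w^\star$ is finite, so the bound is genuinely contractive rather than vacuous. There is essentially no obstacle in this argument: the whole content already lies in Theorem \ref{thm: Mengerson Independence}, and the corollary is simply the observation that $w^\star$ is the sharpest admissible choice of the constant $M$ appearing there, combined with the trivial fact that a pointwise bound uniform in $x$ can be replaced by its maximum over $x$.
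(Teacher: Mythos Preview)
Your proposal is correct and matches the paper's approach exactly: the paper presents this as ``an immediate corollary'' of Theorem~\ref{thm: Mengerson Independence}, and your two-line derivation---apply Part~2 with $M=w^\star$, then take the supremum over $x$---is precisely what is intended. Your handling of the $w^\star=\infty$ case and the sanity check $w^\star\geq 1$ are nice additions that the paper does not spell out.
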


Corollary \ref{cor: IMH upper bound} gives the best possible upper bound of Theorem \ref{thm: Mengersen Independence}. Our next theorem shows, $(1- \frac 1{w^\star})^n$ is indeed a lower bound and is thus the exact convergence speed of the IMH chain. 

\begin{theorem}\label{thm: exact convergence speed} 
	With all the notations as above,   the exact convergence speed of the IMH chain is given by:
	\begin{equation}\label{eqn: exact convergence speed}
	\dimh(n) =   (1- \frac 1{w^\star})^n.
	\end{equation}
	In particular, assume that there exists a point which attains $w^\star$, i.e., there exists $x^\star\in \mathcal X$ such that $w(x^\star) = w^\star$, then we have:
	\begin{equation}\label{eqn: exact convergence speed, continued}
	\dimh(n) =   \|P^n(x^\star,\cdot) - \pi\|_\TV = (1- \frac 1{w^\star})^n.
	\end{equation}
\end{theorem}
It suffices to assume $w^\star > 1$ henceforth, as $w^\star = 1$ means  we are sampling from $\pi$ directly and $\dimh(n) = 0 = (1 - 1/w^\star)^n$ automatically holds. We will show both $\dimh(n) \leq   (1- \frac 1{w^\star})^n$ and $\dimh(n) \geq   (1- \frac 1{w^\star})^n$ holds. Though the first part is already shown in Mengersen and Tweedie \cite{mengersen1996rates}, we provide give a coupling proof here for  the sake of completeness.
\begin{proof}[Proof of the upper bound in Theorem \ref{thm: exact convergence speed}]
	
	We prove the upper bound by a coupling argument. We define a coupling of the IMH algorithm to be a pair of two stochastic processes $(\Phi, \tilde \Phi)\doteq (\Phi_i, \tilde \Phi_i)_{i= 0}^\infty$ such that both $\Phi$ and $\tilde \Phi$ are both IMH chains with proposal density $p$, though they may have different initial distributions. We further require the two chains stay together after they visit the same state, i.e.:
	\begin{equation}\label{eqn: coupling}
	\text{If } \Phi_i = \tilde\Phi_i  \text{ for some } i, \text{ then for any } j \geq i:
	\Phi_j = \tilde \Phi_j.
	\end{equation}
	Moreover, suppose $\tilde \Phi$ starts at stationary, in other words, 
	\begin{equation}\label{eqn: stationary initialization}
	\tilde \Phi_0 \sim \pi,
	\end{equation}
	then it is clear that  $\tilde\Phi_i $ is also distributed according to $\pi$. Then the following inequality is standard in Markov chain theory, see Levin et al. \cite{levin2017markov} Theorem 5.2, or Rosenthal \cite{rosenthal1995minorization} page 16 for a proof. 
	\begin{fact} Let $(\Phi, \tilde \Phi)$ be an arbitrary coupling satisfying \ref{eqn: coupling} and \ref{eqn: stationary initialization}. Let $T$ be the first time the two chains meet:
		\begin{equation}\label{eqn:couple time}
		T \doteq \min\{n, \Phi_n = \tilde \Phi_n\}.
		\end{equation}
		Then we have:
		\begin{equation}\label{eqn:coupling inequality}
		\|P^n(x,\cdot) - \pi\|_\TV \leq \Prob(T \geq n).
		\end{equation} 
	\end{fact}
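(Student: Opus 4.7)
The plan is to prove the coupling inequality via the standard route: express both $P^n(x,\cdot)$ and $\pi$ as the marginal laws of the coupled chains $\Phi$ and $\tilde\Phi$, and then exploit the fact that on the event where the two chains have already met, the marginals agree pointwise, so only the ``bad'' event where they have not yet coalesced contributes.

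Concretely, I will use the definition
\[
\|P^n(x,\cdot) - \pi\|_\TV \;=\; \sup_{A \in \mathcal F} \bigl| P^n(x,A) - \pi(A) \bigr|.
\]
Since $\Phi_0 = x$ and $\tilde\Phi_0 \sim \pi$, together with the fact that both components of the coupling marginally evolve as IMH chains with kernel $P$, we have $\Phi_n \sim P^n(x,\cdot)$ and $\tilde\Phi_n \sim \pi$. Fixing an arbitrary measurable $A$, I would write
\[
P^n(x,A) - \pi(A) \;=\; \Prob(\Phi_n \in A) - \Prob(\tilde\Phi_n \in A),
\]
and split each term according to whether $\Phi_n = \tilde\Phi_n$ or not. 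On $\{\Phi_n = \tilde\Phi_n\}$ the indicators $\Indc_{\{\Phi_n \in A\}}$ and $\Indc_{\{\tilde\Phi_n \in A\}}$ agree, so these contributions cancel. The remaining difference is
\[
\Prob(\Phi_n \in A, \Phi_n \ne \tilde\Phi_n) - \Prob(\tilde\Phi_n \in A, \Phi_n \ne \tilde\Phi_n),
\]
whose absolute value is at most $\Prob(\Phi_n \ne \tilde\Phi_n)$.

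The final step is to invoke the stickiness property \eqref{eqn: coupling}: if the chains ever coincide at some time $i \le n$, they remain equal thereafter, so $\{\Phi_n \ne \tilde\Phi_n\} \subseteq \{T > n\} \subseteq \{T \geq n\}$. Taking the supremum over $A$ then yields $\|P^n(x,\cdot)-\pi\|_\TV \le \Prob(T \geq n)$, as claimed.

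No part of this argument is genuinely hard; the only points requiring care are (i) making sure the coupling really produces the correct marginals at every time $n$ (so that one is allowed to identify $\Phi_n$ and $\tilde\Phi_n$ with samples from $P^n(x,\cdot)$ and $\pi$ respectively), and (ii) handling the containment $\{\Phi_n \ne \tilde\Phi_n\} \subseteq \{T \geq n\}$ cleanly, which uses \eqref{eqn: coupling} in an essential way. Since the statement is a general coupling fact whose proof is already in the literature (as referenced to \cite{levin2017markov} and \cite{rosenthal1995minorization} right after the statement), I expect no serious obstacle; the contribution of the write-up is just to make the three bullet points above precise for the IMH setting.
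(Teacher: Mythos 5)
Your argument is correct and is precisely the standard coupling proof; the paper does not prove this Fact itself but cites Levin et al.\ and Rosenthal, whose argument is exactly the one you reproduce (split on $\{\Phi_n=\tilde\Phi_n\}$, cancel, bound the remainder by $\Prob(\Phi_n\ne\tilde\Phi_n)\le\Prob(T\ge n)$ via the stickiness property). No gaps; your care about the marginals and the containment $\{\Phi_n\ne\tilde\Phi_n\}\subseteq\{T>n\}\subseteq\{T\ge n\}$ is exactly what is needed.
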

	
	To use  inequality \ref{eqn:coupling inequality}, it suffices to design a coupling between $\Phi$ and $\tilde \Phi$ and bound the tail probability of meeting time  $T$.  
	
	In our IMH algorithm example, given any current state $x$, the actual transition density from $x$ to $y$ is given by:
	\[
	P(x,y) = p(y) \min\{\frac{w(y)}{w(x)}, 1\}\geq \frac{ p(y)w(y)}{w^\star} = \frac{\pi(y)}{w^\star}.
	\] 
	Therefore, the transition kernel $P$ satisfies the following minorization condition:
	\begin{equation}\label{eqn:minorization}
	P(x, \cdot) \geq \frac 1 {w^\star} \pi(\cdot)
	\end{equation}
	for any $x\in\mathcal X$. 
	
	Inequality \eqref{eqn:minorization} allows us to split $P(x,\cdot)$ into a mixture of two probability distributions:
	\begin{equation}\label{eqn:mixture density}
	P(x,\cdot) = \frac{1}{w^\star} \pi(\cdot) + (1 -  \frac{1}{w^\star}) q_{\text{res}}(x,\cdot),
	\end{equation}
	where $q_{\text{res}}(x,\cdot)$ is the residual measure defined by
	$
	q_{\text{res}}(x, A) = \frac{\pi(A) - \frac 1 {w^\star}P(x, A)}{ 1 -  \frac{1}{w^\star} } 
	$
	for any measurable set $A$.
	
	Now we are ready to define our coupling. At time $k$, let $x, x'$ be two arbitrary current states of $\Phi$ and $\tilde \Phi$, respectively. Assume $x' = x$, then sample $\Phi_{k+1} $ from $P$ and set $\Phi_{k+1} = \tilde \Phi_{k+1}$. Otherwise, flip a coin with head probability $\frac {1}{w^\star}$, if it comes up heads,  sample $\Phi_{k+1} $ according to $\pi$ and set $\Phi_{k+1} = \tilde \Phi_{k+1}$. Otherwise, sample $\Phi_{k+1}$ and $\tilde\Phi_{k+1}$ from $q_{\text{res}}(x, \cdot)$ and $q_{\text{res}}(x', \cdot)$ independently. 
	It is not hard to verify the pair $(\Phi, \tilde \Phi)$ defined above is a coupling satisfying \ref{eqn: coupling} and \ref{eqn: stationary initialization}. Meanwhile, at each step before $\Phi$ and $\tilde \Phi$ meets, the meeting probability equals $\frac {1}{w^\star}$. Therefore the following inequality holds:
	\begin{equation}\label{eqn: meeting time}
	\Prob(T \geq n) \leq (1- \frac 1{w^\star})^n.
	\end{equation} 
	Combining \ref{eqn:coupling inequality} with \ref{eqn: meeting time}, we have
	\begin{equation}\label{eqn: upper bound}
	\dimh(n) \leq   (1- \frac 1{w^\star})^n,
	\end{equation}
	as desired.
\end{proof}

The lower bound  requires the following two lemmas. The first lemma shows the total variation distance is lower bounded by a geometric series with  the `rejection probability' as the common ratio.

\begin{lemma}\label{lem: lower bound}
	Let $R(x)$ be the rejection probability of the IMH chain given current state $x$, more precisely, 
	\begin{equation}\label{eqn:rejection probability}
	R(x) = \int_\mathcal X (1 - \min\{\frac{w(y)}{w(x)}, 1\}) p(y) dy.
	\end{equation} 
	Then
	\begin{equation}\label{eqn:tv lower bound}
	\|P^n(x,\cdot) - \pi\|_\TV \geq (R(x))^n.
	\end{equation}
\end{lemma}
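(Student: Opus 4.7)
The plan is to exploit the atom that the IMH chain creates at its starting point through rejection events, and contrast it with the fact that $\pi$ has a density and therefore places no mass on singletons.

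First I would observe that when the chain is currently at $x$, the one-step probability of remaining at $x$ is exactly $R(x)$, since the proposal is always drawn from $p$ (independently of the current state) and the move is rejected with probability $1-\min\{w(y)/w(x),1\}$ integrated against $p(y)$. Crucially, if the chain rejects, the state is still $x$, so the rejection probability at the next step is again $R(x)$.

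Next I would iterate this to obtain
\begin{equation*}
P^n(x,\{x\}) \;\geq\; (R(x))^n,
\end{equation*}
since one sure way to have $\Phi_n = x$ is to reject at every one of the $n$ steps, and these events are conditionally independent with per-step probability $R(x)$. (Other trajectories leaving $x$ and eventually returning are also possible, which is why this is only a lower bound.)

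Finally, because we are on a general state space with $\pi$ absolutely continuous with respect to Lebesgue measure, $\pi(\{x\}) = 0$. Using the definition of total variation distance via the supremum over measurable sets and testing against the singleton $\{x\}$, I get
\begin{equation*}
\|P^n(x,\cdot) - \pi\|_\TV \;\geq\; \bigl|P^n(x,\{x\}) - \pi(\{x\})\bigr| \;=\; P^n(x,\{x\}) \;\geq\; (R(x))^n,
\end{equation*}
which is the desired bound. There is really no major obstacle here; the only point that deserves care is the appeal to $\pi(\{x\}) = 0$, which is why the continuity assumption on $\pi$ from the beginning of Section~\ref{sec: IMH general} is essential — in the discrete setting this simple singleton argument fails and a different proof will be needed.
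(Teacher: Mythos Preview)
Your proof is correct and follows exactly the same route as the paper: bound $P^n(x,\{x\})$ below by $(R(x))^n$ via repeated rejection, use $\pi(\{x\})=0$ from continuity of $\pi$, and test total variation against the singleton $\{x\}$. Your write-up is in fact more detailed than the paper's, and your closing remark about why the argument breaks in the discrete case correctly anticipates the need for the separate treatment in Section~\ref{sec: IMH convergence, discrete}.
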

\begin{proof}[Proof of Lemma \ref{lem: lower bound}]
	Given a Markov chain starting at $x$, then it is clear that the probability of staying at $x$ after $n$ steps is no less than $(R(x))^n$.
	Therefore, we have
	\[P^n(x,\{x\}) \geq (R(x))^n.\]
	Meanwhile, it is clear that $\pi(\{x\}) = 0$ as $\pi$ is a continuous distribution. We have:
	\[
	\|P^n(x,\cdot) - \pi\|_\TV  =  \max_{A\subset \mathcal X} \|P^n(x,A) - \pi(A) \|\geq   (R(x))^n -  0 =  (R(x))^n,
	\] 
	as desired.
\end{proof}
The next lemma gives the formula for computing $R(x)$:

\begin{lemma}\label{lem: rejection probability}
	With all the notations as above, we have
	\begin{equation}\label{eqn: compute rejection probability}
	R(x) = 1 - \frac{\int_{y \in C(w(x))}\pi(y) dy } {w(x)} - \int_{y\in C^c(w(x))} p(y) dy ,
	\end{equation}
	where $C(s) \doteq \{x\in\calX : w(x) \leq s\} = w^{-1}((-\infty,s])$ is a subset of $\mathcal X$ which contains all the points with ratio no larger than $s$.
	
	In particular,  if there exists $x^\star\in \mathcal X$ such that $w(x^\star) = w^\star$, then
	\begin{equation}\label{eqn: largest rejection probability}
	R(x^\star) = 1 - \frac{1}{w^\star}.
	\end{equation}
\end{lemma}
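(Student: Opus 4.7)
The plan is to compute $R(x)$ directly from its definition by splitting the integration domain $\mathcal X$ into the two sublevel regions $C(w(x)) = \{y : w(y) \leq w(x)\}$ and its complement $C^c(w(x)) = \{y : w(y) > w(x)\}$. On $C(w(x))$ the ratio $w(y)/w(x)$ lies in $[0,1]$, so $\min\{w(y)/w(x), 1\} = w(y)/w(x)$, while on $C^c(w(x))$ this minimum is exactly $1$ and the integrand of \eqref{eqn:rejection probability} vanishes. Thus only the first region contributes, and I would obtain
\[
R(x) = \int_{C(w(x))} \pth{1 - \frac{w(y)}{w(x)}} p(y)\, dy.
\]

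Next I would use the key identity $w(y)\, p(y) = \pi(y)$, which follows at once from the definition $w = \pi/p$, to rewrite the second term as $\frac{1}{w(x)}\int_{C(w(x))} \pi(y)\, dy$. This gives
\[
R(x) = \int_{C(w(x))} p(y)\, dy \;-\; \frac{1}{w(x)}\int_{C(w(x))} \pi(y)\, dy.
\]
Finally, using that $p$ is a probability density so $\int_{C(w(x))} p\, dy = 1 - \int_{C^c(w(x))} p\, dy$, I arrive at the claimed formula \eqref{eqn: compute rejection probability}.

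For the specialization \eqref{eqn: largest rejection probability}, note that the assumption in Theorem \ref{thm: exact convergence speed} ensures $w^\star$ is attained at some $x^\star \in \mathcal X$ and is the global supremum of $w$. Hence $w(y) \leq w^\star = w(x^\star)$ for every $y \in \mathcal X$, which means $C(w^\star) = \mathcal X$ and $C^c(w^\star) = \emptyset$ (or at worst a $p$-null set where equality might need to be handled). Consequently $\int_{C^c(w^\star)} p(y)\, dy = 0$ and $\int_{C(w^\star)} \pi(y)\, dy = 1$, and substitution into the just-derived formula yields $R(x^\star) = 1 - 1/w^\star$.

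I do not anticipate any substantial obstacle: the argument is essentially a bookkeeping exercise driven by the algebraic identity $wp = \pi$. The only mild subtlety is ensuring the complement $C^c(w^\star)$ has zero mass under both $p$ and $\pi$; this is immediate from the supremum being attained pointwise, given the continuity assumptions on $p$ and $\pi$ made at the start of Section \ref{sec: IMH general}.
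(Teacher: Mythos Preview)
Your proof is correct and follows essentially the same approach as the paper: split $\mathcal X$ into $C(w(x))$ and its complement, use $w(y)p(y)=\pi(y)$, and then specialize at $x^\star$ where $C(w^\star)=\mathcal X$. The only cosmetic difference is that the paper expands $\int_{\mathcal X}(1-\min)\,p$ directly as $1-\int_{C}\frac{w(y)}{w(x)}p-\int_{C^c}p$ in one line, whereas you first restrict to $C(w(x))$ and recover the $1$ afterward via $\int_{C}p=1-\int_{C^c}p$; the content is identical.
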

\begin{proof}[Proof of Lemma \ref{lem: rejection probability}]
	For fixed $x$, we could split $\mathcal X$  into $C(w(x))$ and  $C^c(w(x))$, straightforward calculation gives:
	\begin{align*}
	R(x) &= \int_\mathcal X (1 - \min\{\frac{w(y)}{w(x)}, 1\}) p(y) dy \\
	& = 1 - \frac{\int_{y \in C(w(x))}p(y)  w(y)dy }{w(x)} - \int_{y\in C^c(w(x))} p(y) dy \\
	& = 1 - \frac{\int_{y \in C(w(x))}\pi(y) dy } {w(x)} - \int_{y\in C^c(w(x))} p(y) dy .
	\end{align*}
	In particular, when $x^\star$ exists, $C(w(x^\star)) = C(w^\star) = \mathcal X$ by definition, therefore 
	\[
	R(x^\star) = 1 - \frac{\int_{y\in \mathcal X} \pi(y) dy }{w^\star}  = 1 - \frac{1}{w^\star},
	\]
	which completes our proof.
\end{proof}
The lower bound of Theorem \ref{thm: exact convergence speed} is immediate after combining Lemma \ref{lem: lower bound} and Lemma \ref{lem: rejection probability}.
\begin{proof}[Proof of the lower bound in Theorem \ref{thm: exact convergence speed}]
	First we assume  there exists $x^\star\in \mathcal X$ such that $w(x^\star) = w^\star$. Combining Lemma \ref{lem: lower bound} and Lemma \ref{lem: rejection probability}, we have:
	\begin{align*}
	\dimh(n)  =   \sup_{x \in \supp(\pi)  } \|P^n(x,\cdot) - \pi\|_\TV \geq \|P^n(x^\star,\cdot) - \pi\|_\TV 
	\geq R^n(x^\star) = ( 1 -\frac 1 {w^\star})^n,
	\end{align*}
	which proves \ref{eqn: exact convergence speed, continued}. 
	
	When $x^\star$ is not guaranteed to exist, we use the standard technique to approximate $w^\star$ by a series of $w(x_n)$. More precisely,  For any $\epsilon > 0$, choose $\delta > 0$ such that $\delta < \frac{\epsilon (w^\star)^2}{1 + \epsilon w^\star}$. By definition, there exists
	$x_\delta$ such that $w(x_\delta) > w^\star - \delta$. Then $R(x_\delta)$  can be bounded by \eqref{eqn:rejection probability}
	\begin{equation}\label{eqn: rejection probability inequality}
	R(x_\delta) \geq \int_\mathcal X (1 - \frac{w(y)}{w(x_\delta)}) p(y) dy = 1 - \frac {1}{w(x_\delta)} > 1 - \frac{1}{w^\star - \delta} \geq (1 - \frac 1 {w^\star} - \epsilon).
	\end{equation}
	By Lemma \ref{lem: lower bound}, we have:
	\begin{align*}
	\dimh(n) & =   \sup_{x \in \supp(\pi) }  \|P^n(x,\cdot) - \pi\|_\TV  \geq\|P^n(x_\delta,\cdot) - \pi\|_\TV \geq  R^n(x_\delta)
	 = (1 - \frac 1 {w^\star} - \epsilon)^n.
	\end{align*}
	Since the above inequality holds uniformly for $\epsilon > 0$, we let $\epsilon \rightarrow 0$ and get 
	\[
	\dimh(n)  \geq \inf_{\epsilon > 0 } (1 - \frac 1 {w^\star} - \epsilon)^n = (1 - \frac 1 {w^\star})^n,
	\]
	which proves \eqref{eqn: exact convergence speed}, as desired.
\end{proof}
\begin{remark}\label{rem: general state convergence}
	When $x^\star$ exists, another way of showing the lower bound is as follows. We  observe that \begin{equation}\label{eqn: measure decomposition}
	P(x^\star, \cdot) = R(x^\star)\delta_{x^\star} + \frac {1}{w^\star} \pi(\cdot),\end{equation} 
	where $\delta_{x^\star} $ is the point mass at $x^\star$. As $\pi$ is invariant under the action of $P$, i.e., $\pi P = \pi$, applying the transition kernel $P$ on \eqref{eqn: measure decomposition} $n$ times yields:
	\[
	P^n(x^\star, \cdot) =  R^n(x^\star) \delta_{x^\star} + (1 -  R^n(x^\star))\pi(\cdot),
	\]
	and therefore $ \|P^n(x^\star,\cdot) - \pi\|_\TV = R^n(x^\star) \| \delta_{x^\star} - \pi\|_\TV = ( 1 -\frac 1 {w^\star})^n$.
\end{remark}

Theorem \ref{thm: exact convergence speed} gives the exact convergence speed as the upper bound matches with the lower bound exactly. This result closes the gap in the literature regarding the convergence speed of IMH algorithm.  For example,  Smith and Tierney \cite{smith1996exact} proved that $\dimh(t)$ is upper bounded by  a term of order at most $ ( 1 -\frac 1 {w^\star})^n$ (see page 5 of \cite{smith1996exact}). A similar result is also proved by Tierney (\cite{tierney1994markov}, Corollary 4). Mengersen and Tweedie (\cite{mengersen1996rates}, Theorem 2.1) proves $\dimh(t)\leq ( 1 -\frac 1 {w^\star})^n $. But none of the previous results contain the lower bound.

Now we provide some concrete examples to help illustrate the use of Theorem \ref{thm: exact convergence speed}. The next example is discussed in Smith and Tierney \cite{smith1996exact}, and Jones and Hobert \cite{jones2001honest}.

\begin{example}[Exponential target with exponential proposal]\label{eg: exponential}
	Suppose  the target distribution is $\text{Exp}(1)$, that is, $\pi(x) = e^{-x}$ and $\mathcal X  = [0,\infty)$. Consider the IMH chain $\Phi$ with an $\text{Exp}(\theta)$ proposal, that is, $p(x) = \theta e^{-\theta x}$. Therefore the weight function has the following form
	\[
	w(x) = \frac{\pi(x)}{p(x)} = \frac{1}{\theta} e^{-(1-\theta)x}.
	\]
	It is not hard to verify that $\sup w(x) <\infty$ is equivalent to $0 < \theta\leq 1$. Therefore, when $\theta \in (0,1]$, the algorithm is uniformly ergodic; otherwise, it is not geometrically ergodic. 
	
	For any fixed $\theta \in (0,1]$, we have $w^\star(\theta) = \frac{1}{\theta}$ which is attained at $x = 0$. Therefore we could apply Theorem \ref{thm: exact convergence speed} and conclude that  $
	\dimh(\theta, n) = (1 - \theta)^n.$
	This also shows the `optimal choice' of $\theta$ is $\theta = 1$, which is obvious  as $\theta = 1$ corresponds to sampling from the stationary distribution $\mathsf{Exp}(1)$ directly.
	
	For any arbitrary fixed $\theta$ and  accuracy $\epsilon$, we could also solve the equation $
	\epsilon =  (1 -\theta)^n$
	for $n$ to derive the number of steps for the IMH chain to converge  within $\epsilon$-accuracy. In this case we have 
	\begin{equation}\label{eqn:number of steps, exponential IMH}
	n = \frac{\log\epsilon}{\log(1- \theta)}
	\end{equation}
	steps are both necessary and sufficient. Let the accuracy $\epsilon$ be fixed at  $0.01$, then \eqref{eqn:number of steps, exponential IMH} gives us $n = 6.64$ when $\theta = 0.5$; $n = 43.71$ when $\theta = 0.1$; and $n = 458.21$ when $\theta = 0.01$. The relationship between $n$ and $\theta$ is also shown in Figure \ref{fig:exponential_exponential}. As expected, when $\theta$ is getting closer to $1$, the IMH algorithm is converging faster. 
	
	\begin{figure}[H]
		\includegraphics[width=0.5 \textwidth]{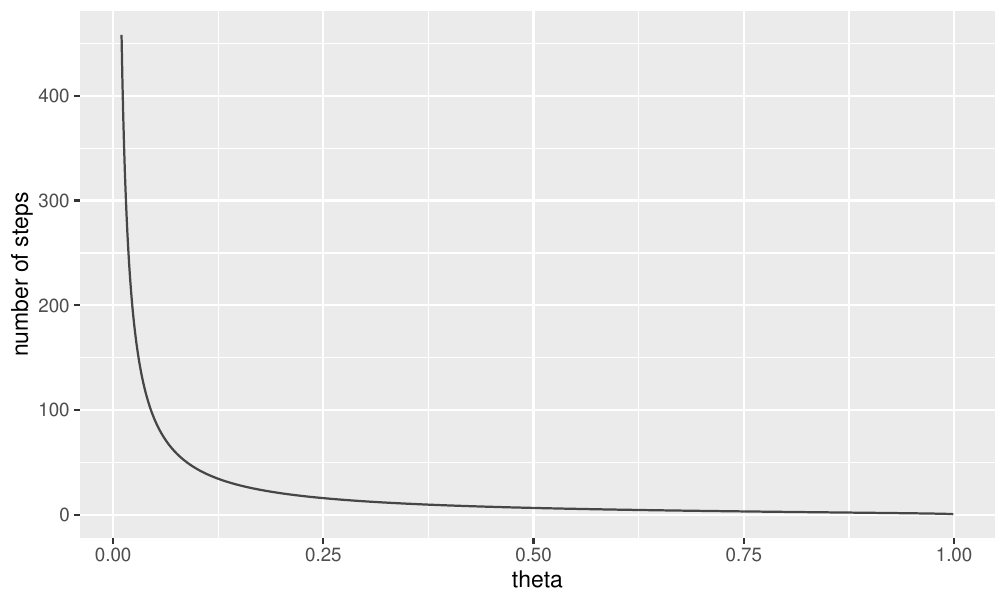}
		\caption{Number of steps  for the IMH algorithm to converge, $\epsilon = 0.01$}
		\label{fig:exponential_exponential}
	\end{figure}
\end{example}

Now we briefly conclude this part, we provide the answer to our first question (Q1) proposed in Section \ref{sec:introduction}   for the IMH algorithm on general state spaces. Our results can be briefly summarized below: 

\begin{enumerate}
	\item If $w^\star  = \infty$, then the IMH chain is not geometrically ergodic, thus the chain does not converge at a geometric rate.
	\item If $w^\star < \infty$  then we can derive the following exact converge speed:
	\[	\dimh(n) =   (1- \frac 1{w^\star})^n.\] In particular,
	if $w^\star$ can be attained at some $x^\star \in \mathcal X$, then the worst-case convergence speed happens when initializing the chain at $x^\star$.
\end{enumerate}

\subsection{Convergence rate at every point }\label{subsec: converge every point}

In  Section \ref{subsec: exact convergence speed}, we have shown the maximum distance between the IMH chain and its stationary distribution decays at the rate of $(1- \frac 1{w^\star})$. However, as we mentioned in Section \ref{sec:introduction}, this only gives an upper bound for the convergence rate of among all possible initializations.  To be more precise, we define the following  functions to describe the convergence rate at a single point $x$. 

\begin{definition}\label{def:convergence rate}
	Let \[r_{-}(x) \doteq \liminf_{n\rightarrow \infty} \|P^n(x,\cdot) - \pi\|_\TV^{1/n} \]
	and
	\[r^{+}(x) \doteq \limsup_{n\rightarrow \infty} \|P^n(x,\cdot) - \pi\|_\TV^{1/n}.\]
	
	In particular, if $r_{-}(x) = r^{+}(x)$ for some $x\in \mathcal X$, we define the convergence rate at $x$ to be $$r(x) \doteq \lim_{n\rightarrow \infty} \|P^n(x,\cdot) - \pi\|_\TV^{1/n}.$$ 
\end{definition}

The following corollary is immediate using  Lemma \ref{lem: lower bound} and Theorem \ref{thm: exact convergence speed}.

\begin{corollary}\label{cor: bound on convergence rate}
	With all the notations as above, we have
	\begin{itemize}
		\item 	For every $x\in  \supp(\pi)$, $
		R(x) \leq r_{-}(x) \leq r^{+}(x) \leq 1 - \frac 1{w^\star},$
		where $R(x)$ is the rejection probability at $x$ (defined in Lemma \ref{lem: lower bound}).
		
		\item $\sup_x r^{+}(x) = \sup_x r_{-}(x)  = 1 - \frac 1 {w^\star}$.
		\item If there is some $x^\star \in \calX$ such that $w(x^\star) = w^\star$, then
		$r(x^\star) = 1 - \frac 1 {w^\star}$.
	\end{itemize}
\end{corollary}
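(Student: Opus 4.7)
The plan is to derive all three bullets of Corollary \ref{cor: bound on convergence rate} by assembling bounds that have already been proved in this section. I would first read the quantities $r_-(x)$ and $r_+(x)$ as $\liminf_n \|P^n(x,\cdot)-\pi\|_\TV^{1/n}$ and $\limsup_n \|P^n(x,\cdot)-\pi\|_\TV^{1/n}$ (the exponentiated form of the stated logarithmic definitions), so that every quantity in the statement lives in $[0,1]$ and is directly comparable to $R(x)$ and $1-1/w^\star$.

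For the first bullet, I would take $n$-th roots in the sandwich
\begin{equation*}
R(x)^n \;\leq\; \|P^n(x,\cdot)-\pi\|_\TV \;\leq\; \dimh(n) \;\leq\; \bigl(1-\tfrac{1}{w^\star}\bigr)^n,
\end{equation*}
whose left half is Lemma \ref{lem: lower bound} and whose right half is Corollary \ref{cor: IMH upper bound}. Passing $\liminf$ on the left and $\limsup$ on the right then produces $R(x)\leq r_-(x)\leq r_+(x)\leq 1-1/w^\star$ at every $x\in\mathcal X$.

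For the second bullet, the upper bound on $\sup_x r_+(x)$ is immediate from the first bullet, so only the matching lower bound on $\sup_x r_-(x)$ needs attention. I would split into two cases. If $w^\star$ is attained at some $x^\star$, Lemma \ref{lem: rejection probability} gives $R(x^\star)=1-1/w^\star$, and the first bullet forces $r_-(x^\star)\geq 1-1/w^\star$. If $w^\star$ is only approached, I would borrow the construction from the proof of Theorem \ref{thm: IMH general convergence}: for every $\epsilon>0$ pick $x_\delta$ with $w(x_\delta)>w^\star-\delta$, where $\delta$ is small enough that inequality \ref{eqn: rejection probability inequality} yields $R(x_\delta)\geq 1-1/w^\star-\epsilon$. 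The first bullet then gives $r_-(x_\delta)\geq 1-1/w^\star-\epsilon$, hence $\sup_x r_-(x)\geq 1-1/w^\star-\epsilon$; letting $\epsilon\downarrow 0$ closes the gap.

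The third bullet is a direct read-off: at $x^\star$ the first-bullet inequalities sandwich $r_-(x^\star)=r_+(x^\star)=1-1/w^\star$, so the limit $r(x^\star)$ exists and equals $1-1/w^\star$. The closed-form identity $\|P^n(x^\star,\cdot)-\pi\|_\TV=(1-1/w^\star)^n$ from Remark \ref{rem: general state convergence} provides an even quicker route. The only mildly subtle step in the whole argument is the non-attained sub-case of the second bullet, where Theorem \ref{thm: IMH general convergence} must be used in place of Theorem \ref{thm: exact convergence speed}, because no single initial point actually realises the optimal rate there.
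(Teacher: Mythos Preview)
Your proposal is correct and follows exactly the route the paper intends: the paper does not give a written proof but simply declares the corollary ``immediate using Lemma \ref{lem: lower bound}, Theorem \ref{thm: exact convergence speed}, and Theorem \ref{thm: IMH general convergence}'', and you have spelled out precisely how those pieces combine. Your observation that $r_-(x)$ and $r_+(x)$ must be read as the exponentiated rates $\liminf_n \|P^n(x,\cdot)-\pi\|_\TV^{1/n}$ (rather than the literal logarithms in Definition \ref{def:convergence rate}) is also well taken, since otherwise the inequalities $R(x)\leq r_-(x)$ and $r_+(x)\leq 1-1/w^\star$ would be dimensionally inconsistent; the same slip appears in formula \eqref{eqn: exact rate}.
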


Corollary \ref{cor: bound on convergence rate} gives a lower bound for $r_{-}(x)$ and an upper bound for $r^{+}(x)$. However, the lower bound does not match the upper lower unless $x = x^\star$.  It is therefore natural to ask whether $r(x)$ always exists, if so, what is the value of $r(x)$? Our next result shows, for the IMH algorithm, every initialization converges at the same rate, which is  $1 - \frac {1}{w^\star}$.

\begin{theorem}\label{thm: every point convergence rate}
	Suppose $w^\star < \infty$. If the IMH chain satisfies one of the following two assumptions:
	\begin{itemize}
		\item $w^\star = w(x^\star)$ for some $x^\star \in \mathcal X$, and both $p$ and $\pi$ are locally Lipschitz. 
		\item Both $\pi$ and $w$ are  Lipschitz continuous. Meanwhile there exists a sequence $\{x_k\}_{k=1}^\infty$ and a constant $c_0 > 0$ such that $\lim_{k\rightarrow \infty}w(x_k) = w^\star$ and $\pi(x_k) \geq c_0$ for every $k$.  
	\end{itemize}
	
	Then for every $x\in \supp(\pi)$:
	\begin{equation}\label{eqn: every point convergence rate}
	r(x) = r_{-}(x) = r^{+}(x) = 1 - \frac {1}{w^\star}. 
	\end{equation}
	In other words, the convergence rate for every $x$ equals $ 1 - \frac {1}{w^\star}$. 
\end{theorem}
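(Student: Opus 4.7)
The plan is in two pieces: an easy upper bound from the uniform convergence theorems, and a more delicate lower bound obtained by decomposing $P$ through its minorization and then analyzing a residual Markov kernel $\mathcal{Q}$.

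\textbf{Upper bound.} For every $x\in\mathcal{X}$, $\|P^n(x,\cdot)-\pi\|_\TV \le \dimh(n)$, and Thm \ref{thm: exact convergence speed} in the first case (respectively Thm \ref{thm: IMH general convergence} in the second) yields $\dimh(n)^{1/n}\to 1-1/w^\star$. Taking $\limsup$ gives $r^+(x)\le 1-1/w^\star$ for every $x$.

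\textbf{Lower bound.} Set $\alpha=1-1/w^\star$ and let $\Pi(x,\cdot)\equiv\pi$. The minorization $P(x,\cdot)\ge(1-\alpha)\pi$ proved inside Thm \ref{thm: exact convergence speed} shows that $\mathcal{Q}(x,\cdot):=\alpha^{-1}[P(x,\cdot)-(1-\alpha)\pi]$ is a Markov kernel, and $\pi\mathcal{Q}=\pi$ follows from $\pi P=\pi$. Using the resulting commutations $\Pi^2=\Pi$, $\mathcal{Q}\Pi=\Pi$ and $\Pi\mathcal{Q}=\Pi$, induction on $n$ gives
$$ P^n \;=\; (1-\alpha^n)\Pi + \alpha^n\mathcal{Q}^n, \qquad\text{hence}\qquad \|P^n(x,\cdot)-\pi\|_\TV \;=\; \alpha^n\,\|\mathcal{Q}^n(x,\cdot)-\pi\|_\TV.$$
Since $\|\mathcal{Q}^n(x,\cdot)-\pi\|_\TV\le 1$, this identity recovers the upper bound; for the lower bound it reduces the problem to showing $\liminf_n\|\mathcal{Q}^n(x,\cdot)-\pi\|_\TV^{1/n}\ge 1$, i.e.\ that $\|\mathcal{Q}^n(x,\cdot)-\pi\|_\TV$ does not decay exponentially from any starting $x$.

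The Lipschitz hypothesis is used to exhibit near-absorbing behavior of $\mathcal{Q}$ at $x^\star$. In the first case Rem \ref{rem: general state convergence} gives $\mathcal{Q}(x^\star,\cdot)=\delta_{x^\star}$, and local Lipschitz continuity of $p$ and $\pi$ propagates this to the estimate $\mathcal{Q}(y,\{y\})=R(y)/\alpha\ge 1-L\|y-x^\star\|$ on a neighborhood of $x^\star$; in the second case, the global Lipschitz assumption on $w$ provides, for every $\epsilon>0$, a whole set $V_\epsilon$ with $\pi(V_\epsilon)>0$ on which $\mathcal{Q}(y,\{y\})\ge 1-\epsilon/\alpha$. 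From an arbitrary starting $x$, the minorization gives $P(x,V_\epsilon)\ge(1-\alpha)\pi(V_\epsilon)>0$, so with positive probability the chain enters $V_\epsilon$ at time $1$ at a continuously distributed point $y$; conditioning on this and on the ``stay at $y$'' trajectory for the next $n-1$ steps contributes a density of size at least $\gtrsim p(y)(1-\epsilon/\alpha)^{n-1}$ to $\mathcal{Q}^n(x,\cdot)$ on $V_\epsilon$. Comparing this contribution against the density of $\pi$ on a suitably chosen sub-neighborhood yields $\|\mathcal{Q}^n(x,\cdot)-\pi\|_\TV\ge C(x,\epsilon)(1-\epsilon/\alpha)^n$ for all large $n$; letting $\epsilon\to 0$ gives $\liminf_n\|\mathcal{Q}^n(x,\cdot)-\pi\|_\TV^{1/n}\ge 1$ and completes the proof. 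The principal obstacle is this last comparison step: unlike in the $x=x^\star$ case, the ``frozen'' trajectory contributes an absolutely continuous piece of $\mathcal{Q}^n(x,\cdot)$ rather than a genuine atom, so quantitative Lipschitz control is needed to ensure a strictly positive TV excess over the (also absolutely continuous) density of $\pi$ near $x^\star$, rather than a vanishing difference of two nearly equal densities.
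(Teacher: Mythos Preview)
Your residual–kernel identity
\[
P^n \;=\; (1-\alpha^n)\,\Pi \;+\; \alpha^n\,\mathcal Q^n,\qquad \|P^n(x,\cdot)-\pi\|_\TV=\alpha^n\,\|\mathcal Q^n(x,\cdot)-\pi\|_\TV
\]
is correct and is an elegant reduction that the paper does not use. However, the lower bound part of your argument does not close, and the gap is exactly the ``principal obstacle'' you flag but do not resolve.

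Concretely: after one $\mathcal Q$–step the transition density from a point $x$ with $w(x)<w(y)$ into $y$ near $x^\star$ is
\[
q(x,y)\;=\;\alpha^{-1}\bigl[p(y)-\tfrac{1}{w^\star}\pi(y)\bigr]\;=\;\alpha^{-1}p(y)\Bigl(1-\tfrac{w(y)}{w^\star}\Bigr),
\]
which vanishes as $w(y)\to w^\star$. Hence your ``enter $V_\epsilon$ and freeze'' path contributes density at most $q(x,y)\bigl(R(y)/\alpha\bigr)^{n-1}\ll \pi(y)$ near $x^\star$. A \emph{lower} bound on the $\mathcal Q^n$–density that lies \emph{below} $\pi$ yields no TV lower bound at all; you would need either that lower bound to exceed $\pi$ on a set of positive measure (it does not) or an \emph{upper} bound on the $\mathcal Q^n$–density showing it falls short of $\pi$. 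Your sketch supplies neither. There is also a slip where you invoke the $P$–minorization $P(x,V_\epsilon)\ge(1-\alpha)\pi(V_\epsilon)$ while the chain under analysis is $\mathcal Q$; for $\mathcal Q$ this minorization is precisely what fails, since the entry density into $V_\epsilon$ is $O(\epsilon)$.

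The paper handles the lower bound by a completely different route: it invokes the Smith--Tierney exact formula for the absolutely continuous part of $P^n$,
\[
\frac{P^n(x,dy)}{\pi(dy)}\;=\;T_n\bigl(\max\{w(x),w(y)\}\bigr),\qquad T_n(w)=\int_w^\infty \frac{n\lambda^{n-1}(v)}{v^2}\,dv,
\]
and shows directly that $T_n(w(y))\le 1-\tfrac12\alpha^n$ on the shrinking set $D_n=\{y:w(y)>w^\star-\tfrac{w^\star-1}{2n}\}$. This is an \emph{upper} bound on the density, giving $\pi(D_n)-P^n(x,D_n)\ge\tfrac12\alpha^n\pi(D_n)$; the Lipschitz hypothesis then yields $\pi(D_n)\gtrsim 1/n$, hence $\|P^n(x,\cdot)-\pi\|_\TV\gtrsim\alpha^n/n$ and $r_-(x)\ge\alpha$. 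In your language this is equivalent to proving that $\mathcal Q^n(x,\cdot)$ has density at most $1/2$ (w.r.t.\ $\pi$) on $D_n$, i.e.\ that the $\mathcal Q$–chain \emph{under}-visits the neighborhood of $x^\star$. That is the opposite inequality to the one your ``freeze'' heuristic targets, and it appears to genuinely require the exact $n$–step density (or an argument of comparable strength) rather than a one–step path estimate.
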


To prove Theorem \ref{thm: every point convergence rate}, we need to first define some measure tranformations from $\mathcal X$ to  $\bR$. Let $\tilde \Pi$ and $\tilde P$ be two cumulative distribution functions (CDFs) on $\bR$ defined by: 
\begin{align*}
& \tilde \Pi(s) \doteq \pi(C(s))  = \int_{y\in C(s)} \pi(y) dy\\
& \tilde P(s) \doteq p(C(s)) = \int_{y \in C(s)} p(y) dy, 
\end{align*}
where $C(s) \doteq \{x\in\calX : w(x) \leq s\}$  is defined in Lemma \ref{lem: rejection probability}. The function $\tilde{\Pi}: \bR\rightarrow [0,1]$ maps a real number $s$ to the $\pi$-probability of $C(s)$, the set of all the points with weight no larger than $S$. Therefore $\tilde\Phi$ is non-decreasing and goes to $1$ as $s\rightarrow\infty$ and therefore defines a CDF on $\bR$. The same reason holds for $\tilde P$. We denote the corresponding probability measures  by  $\tilde \Pi(ds)$ and $\tilde P(ds)$. Furthermore, define a function $\lambda: \bR^{+} \rightarrow \bR$ as
\begin{equation}\label{eqn: lambda function}
\lambda(s) \doteq \int_{v\leq s}(1 - \frac{v}{s}) \tilde P(dv) = \tilde P(s) - \frac{\tilde\Pi(s)}{s}.
\end{equation}
Comparing \ref{eqn: lambda function} with \ref{eqn:rejection probability}, it is not hard to verify that $\lambda(w) = 1 - \frac 1 w$ for $w\geq w^\star$, and $\lambda(w(x))$ equals the rejection probability $R(x)$ for any $x\in \mathcal X$. 
We need the following result on the $n$-step transition probability of the IMH algorithm, see Theorem 1 of Smith and Tierney \cite{smith1996exact} for proof.
\begin{theorem}\label{thm: Smith Tierney}
	The $n$-step transition kernel for the IMH chain is given by:
	\begin{equation}\label{eqn: n-step transition}
	P^n(x, dy) = T_n(\max\{w(x), w(y)\})\pi(y) dy + R^n(w(x)) \delta_x(dy),
	\end{equation}
	where $T_n: \bR^+ \rightarrow \bR$ is defined by:
	\begin{equation}\label{eqn: T_n}
	T_n(w) = \int_w^\infty \frac{n \lambda^{n-1}(v)}{v^2} dv.
	\end{equation}
\end{theorem}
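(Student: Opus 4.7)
The plan is to prove the theorem by induction on $n$. The base case $n=1$ is immediate: $T_1(w) = \int_w^\infty v^{-2}\,dv = 1/w$, so the claimed formula reduces to $P(x,dy) = (\pi(y)/\max\{w(x),w(y)\})\,dy + \lambda(w(x))\delta_x(dy)$, which matches the one-step IMH kernel since $p(y)\min\{1,w(y)/w(x)\} = \pi(y)/\max\{w(x),w(y)\}$ and $R(w(x)) = \lambda(w(x))$.

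For the induction step, I apply Chapman--Kolmogorov, $P^{n+1}(x,dy) = \int P(x,dz) P^n(z,dy)$, and substitute both the one-step kernel and the inductive formula for $P^n$. Expanding the product yields four contributions. The delta--delta piece gives $\lambda(w(x))\lambda^n(w(x))\delta_x(dy) = \lambda^{n+1}(w(x))\delta_x(dy)$, which matches the required atomic mass. The continuous--delta piece, after integrating out $z$ through $\delta_z(dy)$, contributes a coefficient $\lambda^n(w(y))/\max\{w(x),w(y)\}$ in front of $\pi(y)\,dy$. The delta--continuous piece contributes $\lambda(w(x))T_n(\max\{w(x),w(y)\})$ in front of $\pi(y)\,dy$. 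Writing $s = w(x)$ and $u = w(y)$, the induction therefore reduces to verifying the identity
\begin{equation*}
I(s,u) := \int \frac{\pi(z)T_n(\max\{w(z),u\})}{\max\{s,w(z)\}}\,dz + \frac{\lambda^n(u)}{\max\{s,u\}} + \lambda(s) T_n(\max\{s,u\}) = T_{n+1}(\max\{s,u\}).
\end{equation*}

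The main obstacle is this identity, which I would attack in two steps. First, I show that $I(s,u)$ depends only on $\max\{s,u\}$. Converting the $z$-integral to an integral over $\mathbb{R}^+$ via the pushforward $\int\pi(z)g(w(z))\,dz = \int v\,g(v)\tilde{P}(dv)$ and splitting the integrand according to where $v$ sits relative to $\min\{s,u\}$ and $\max\{s,u\}$ exposes the dependence on the smaller of the two variables. Differentiating in that smaller variable and using $T_n'(v) = -n\lambda^{n-1}(v)/v^2$, $\lambda'(v) = \tilde{\Pi}(v)/v^2$ (which follows from $\lambda = \tilde{P} - \tilde{\Pi}/v$), and $\tilde{\Pi}(dv) = v\tilde{P}(dv)$, the contributions cancel exactly, showing $I(s,u) = I(m,m)$ for $m := \max\{s,u\}$.

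Second, evaluating at $s = u = m$ and using $\tilde{\Pi}(m)/m + \lambda(m) = \tilde{P}(m)$ collapses $I(m,m)$ to $T_n(m)\tilde{P}(m) + \int_m^\infty T_n(v)\tilde{P}(dv) + \lambda^n(m)/m$. One integration by parts on the integral, with boundary term $T_n(\infty) = 0$, cancels $T_n(m)\tilde{P}(m)$ and leaves $\int_m^\infty n\lambda^{n-1}(v)\tilde{P}(v)/v^2\,dv + \lambda^n(m)/m$. Substituting $\tilde{P}(v) = \tilde{\Pi}(v)/v + \lambda(v)$ and applying a second integration by parts using $(d/dv)\lambda^n(v) = n\lambda^{n-1}(v)\tilde{\Pi}(v)/v^2$ turns the $\tilde{\Pi}$-piece into $-\lambda^n(m)/m + \int_m^\infty \lambda^n(v)/v^2\,dv$, after which everything telescopes into $(n+1)\int_m^\infty \lambda^n(v)/v^2\,dv = T_{n+1}(m)$, closing the induction.
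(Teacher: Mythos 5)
The paper does not actually prove this statement: Theorem \ref{thm: Smith Tierney} is imported verbatim as Theorem 1 of Smith and Tierney \cite{smith1996exact}, so there is no in-paper proof to compare against. Your proposal supplies a genuine, self-contained proof, and I have checked that it is correct. The base case is right ($T_1(w)=1/w$ and $p(y)\min\{1,w(y)/w(x)\}=\pi(y)/\max\{w(x),w(y)\}$, with $R(x)=\lambda(w(x))$ as noted below \eqref{eqn: lambda function}). In the induction step the four Chapman--Kolmogorov contributions are accounted for correctly, and the reduction to the identity $I(s,u)=T_{n+1}(\max\{s,u\})$ is the real content. Working through it: with $m=\max\{s,u\}$, the pushforward identities $\int \pi(z)g(w(z))\,dz=\int v\,g(v)\tilde P(dv)$, $\tilde\Pi(dv)=v\tilde P(dv)$ and $\lambda'(v)=\tilde\Pi(v)/v^2$ do make the dependence on $\min\{s,u\}$ cancel in both cases (when $s\le u$ the cancellation is purely algebraic via $\lambda(s)=\tilde P(s)-\tilde\Pi(s)/s$; when $s>u$ it additionally uses $\int_u^s d(\lambda^n)=\lambda^n(s)-\lambda^n(u)$), collapsing $I$ to $T_n(m)\tilde P(m)+\int_m^\infty T_n(v)\tilde P(dv)+\lambda^n(m)/m$. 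Your two integrations by parts then close the induction, using $T_n(\infty)=0$ and $\lambda^n(v)/v\to 0$ as $v\to\infty$ for the boundary terms. Two small points you should make explicit in a full write-up: the case split $s\le u$ versus $s>u$ (the integrand is not symmetric, so ``differentiating in the smaller variable'' hides two distinct computations), and the interpretation of $R^n(w(x))$ in the statement as $\lambda^n(w(x))$, since $R$ is elsewhere defined as a function of the state $x$ rather than of the weight. This inductive route is arguably more mechanical than Smith and Tierney's original derivation, which obtains the $n$-step kernel directly by a probabilistic analysis of the accepted-proposal weights, but it has the advantage of requiring nothing beyond the one-step kernel and elementary calculus.
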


Using Theorem \ref{thm: Smith Tierney}, the proof of Theorem \ref{thm: every point convergence rate} is given as follows:
\begin{proof}[Proof of Theorem \ref{thm: every point convergence rate}]
	We first prove Theorem \ref{thm: every point convergence rate} under the first assumption. That is, both $p$ and $\pi$ are locally Lipschitz continuous, and $w^\star(x) = w(x^\star)$ for some $x^\star \in\mathcal X$. By Corollary \ref{cor: bound on convergence rate}, it suffices to prove $r_{-}(x) \geq (1 - \frac 1 {w^\star})$. We prove this by contradiction. Suppose there exists an $\epsilon >  0$ and some $x_0\in \mathcal X$ such that 
	\[
	r_{-}(x_0) < 1 - \frac 1 {w^\star-\epsilon} ,
	\] 
	then we claim $w(x_0) \leq  w^\star - \epsilon$. Otherwise we have
	\[
	R(x_0) \geq \int_\mathcal X (1 - \frac{w(y)}{w(x_0)}) p(y) dy = 1 - \frac {1}{w(x_0)} > 1 - \frac 1 {w^\star - \epsilon},
	\]
	which contradicts with $R(x_0) \leq r_{-}(x_0)$.

	Given $w(x_0) \leq w^\star - \epsilon$, for any $y$ with $w(y) \geq w^\star - \epsilon$, \eqref{eqn: n-step transition} gives us:
	\begin{equation}\label{eqn: transition x0}
	P^n(x_0, dy) = T_n(w(y))\pi(y) dy .
	\end{equation}
	
	Notice that, we have the following estimate for $T_n(w)$:
	\begin{align*}
	T_n(w) &= \int_w^{w^\star} \frac{n \lambda^{n-1}(v)}{v^2} dv + \int_{w^\star}^\infty \frac{n (1 - \frac 1 v)^{n-1}}{v^2} dv \\
	& = \int_w^{w^\star} \frac{n \lambda^{n-1}(v)}{v^2} dv  + 1 - (1-\frac 1{w^\star})^n\\
	& \leq n\lambda^{n-1}(w^\star) \int_w^{w^\star}\frac1{v^2} dv  + 1 - (1-\frac 1{w^\star})^n\\
	& = n(1-\frac 1{w^\star})^{n-1} \frac{w^\star - w}{w w^\star } + 1 - (1-\frac 1{w^\star})^n.\\
	\end{align*}
	Let $D_n\doteq \{y\in\mathcal X: w(y) > \max\{1, w^\star - \frac{w^\star - 1}{2n}\}\}$ be a subset of $\mathcal X$. Then for any $y\in D_n$,
	\[
	T_n(w(y)) \leq 1 - \frac12(1- \frac 1{w^\star})^n 
	\]
	Therefore, when $n$ is large enough such that $w^\star - \frac{w^\star - 1}{2n} > w^\star -\epsilon$, by \eqref{eqn: transition x0} we have
	\[
	\lVert P^n(x_0, \cdot) -\pi \rVert_\TV\geq \lvert P^n(x_0, D_n) -\pi(D_n) \rvert\geq \frac12 \pi(D_n)(1- \frac 1{w^\star})^n 
	\]
	Now we claim there exists a universal constant $c > 0$ such that
	\[
	\pi(D_n) \geq \frac c{n^d}
	\]
	where $d$ is the dimensionality of $\cal X$. If the claim is true, then $r_{-}(x_0) \geq \lim_n (\frac{c}{2n^d})^{1/n}(1 - \frac 1{w^\star})  =  1 - \frac 1{w^\star} $, which contradicts with our assumption, as desired. 
	
	To prove the claim, notice that the function $w$ is also locally Lipschitz at $x^\star$ as both $\pi$ and $p$ are locally Lipschitz. Therefore, there exists some small $c' >0$ such that $w(y) \geq w^\star - \frac{w^\star-1}{2n}$ when $ \lVert y - x^\star \rVert \leq \frac{c'} n$. We can choose  $n$ large enough such that both $w^\star - \frac{w^\star - 1}{2n} > 1$ and $\pi(y) >\frac{\pi(x^\star)}{2}$ when $ \lVert y - x^\star \rVert \leq \frac{c'} n$, then we have
	$$\pi(D_n) \geq \int_{\lVert y - x^\star \rVert \leq \frac{c'} n} \pi(y) dy \geq \frac {(c')^d \pi(x^\star )V_d(1)}{2n^d},$$
	where $V_d(1) = \pi^{d/2}/\Gamma(\frac d2 + 1)$ is the volume of the unit $d$-sphere. This concludes the proof of Theorem \ref{thm: every point convergence rate} under the first assumption.
	
	Under the second assumption, we still have 
	\[
	\lVert P^n(x_0, \cdot) -\pi \rVert_\TV\geq \lvert P^n(x_0, D_n) -\pi(D_n) \rvert\geq \frac12 \pi(D_n)(1- \frac 1{w^\star})^n 
	\]
	with $D_n$ defined as above. To lower bound $\pi(D_n)$, we choose a subsequence $\{x_{k_n}\}\subset \{x_k\}$ such that $w(x_{k_n}) > w^\star - \frac{w^\star - 1}{4n}$. From the Lipschitz continuity of both $\pi$ and $w$, we can find $c' > 0$ small enough, such that for every $y$ satisfying $\lVert y - x_{k_n}\rVert\leq \frac{c'}{n}$, $y\in D_n$ and $\pi(y) > c_0/2$. Therefore we have:
	$$\pi(D_n) \geq \int_{\lVert y - x_{k_n} \rVert \leq \frac{c'} n} \pi(y) dy \geq \frac {(c')^d c_0V_d(1)}{2n^d},$$
	where $V_d(1) = \pi^{d/2}/\Gamma(\frac d2 + 1)$ is the volume of $S^{d-1}$, the unit sphere in $\bR^d$. Again, we have $r_{-}(x_0) \geq 1 - \frac 1{w^\star} $, which contracdicts with our assumption, as desired. 
\end{proof}

We believe the extra assumption requiring $\pi$ and $p$  locally Lipschitz continuous is satisfied in almost all the practical assumptions. On the one hand, in most  practical situations,  both $\pi$ and $p$ are continuously differentiable, which implies locally Lipschitz continuous. On the other hand, we do not know if Theorem \ref{thm: every point convergence rate} is true in full generality. It is an outstanding open problem to prove (or disprove) $r(x) = 1 - \frac 1{w^\star}$ for all $x$ without the `locally Lipschitz' assumption. 

We can apply Theorem \ref{thm: every point convergence rate} to Example \ref{eg: exponential} to obtain the convergence rate at every point.
\begin{example}[Exponential target with exponential proposal, continued]\label{exponential, continue}
	With all the settings same as Example \ref{eg: exponential}, since both $p$ and $\pi$ are continuously differentiable and are thus locally Lipschitz, we apply Theorem \ref{thm: every point convergence rate} to conclude the convergence rate for every $x\in [0,\infty)$ equals $1- \theta$. In Section 5 of Smith and Tierney \cite{smith1996exact}, they have calculated the transition probability via formula \ref{eqn: n-step transition} explicitly, and have:
	\[
	\Prob(\Phi_n > y| \Phi_0 = x) = \bigg(1 + \frac{(1-\theta)^n}{n\theta}\bigg) e^{-y} + o\bigg(\frac{(1-\theta)^n}{n}\bigg).
	\]
	The dominant term of total variation distance is also  $(1-\theta)^n$, which agrees with our result.
\end{example} 

Now we conclude this section. In Section \ref{subsec: converge every point} we answer Q2 for the IMH algorithms on general state spaces. Our results suggest, under mild conditions, the IMH algorithm converges at  the same rate (which is $1 - \frac 1 {w^\star}$ as shown in Section \ref{subsec: exact convergence speed})  for every $x\in \mathcal X$. Combining Section \ref{subsec: exact convergence speed} with Section \ref{subsec: converge every point}, we can answer Q1 and Q2 together in the following way: For an IMH chain on a general state space, the exact converge rate equals $1 - \frac 1 {w^\star}$, and every point has the same convergence rate. In particular, if $w^\star$ can be attained at some $x^\star$, then the exact convergence speed for $x^\star$ equals $(1 - \frac 1 {w^\star})^n$.
\section{IMH on discrete state spaces}\label{sec: IMH convergence, discrete}
Let $\mathcal X = \{x_1, x_2, \cdots, x_n\}$ be a  discrete state space. Let $\pi$ and $p$ be the target and proposal probability mass functions (PMFs), respectively.  Again, we define the ratio function $w \doteq \frac{\pi}{p}$ the same as in Section \ref{sec: IMH general}. Without loss of generality, we can relabel the elements in $\mathcal X$ such that $w$ is non-increasing, i.e.,
\[
w^\star = w(x_1) \geq w(x_2) \geq \cdots \geq w(x_n).
\]
In other words, the supremum of function $w$ is attained at the first state. In particular, if $w^\star$ is attained at multiple states, we label the first state to be an arbitrary one of these which has the smallest $\pi$-probability.

The IMH algorithm has the following transition probability kernel:
\begin{equation}\label{eqn: markov kernel}
P(x_i,x_j) = \begin{cases}
p(x_j)\min\{1, \frac{w(x_j)}{w(x_i)}\} \qquad \text{if } j\neq i\\
p(x_i) + \sum_k p(x_k) \max\{0, 1 - \frac{w(x_k)}{w(x_i)}\}\qquad \text{if } j = i.
\end{cases}
\end{equation}

It is tempting to believe that all the results in Section \ref{sec: IMH general} still hold for discrete state space. However, our next result shows, though the exact convergence rate is still $(1- \frac 1{w^\star})$, Theorem \ref{thm: exact convergence speed} does not hold.

\subsection{Exact convergence rate} \label{subsec: exact speed, discrete space}

\begin{theorem}\label{thm: exact convergence rate, discrete} 
	With all the notations as above, we have:
	\begin{equation}\label{eqn: exact convergence rate, discrete}
	(1 - \pi(x_1)) (1- \frac 1{w^\star})^t \leq \dimh(t) \leq (1 - \inf_{x_i\in \calX} \pi(x_i))   (1- \frac 1{w^\star})^t
	\end{equation}
\end{theorem}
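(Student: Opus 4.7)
The plan is to mimic the argument sketched in Remark \ref{rem: general state convergence}, but now account for the fact that in the discrete setting the stationary mass $\pi_1 = \pi(\{x_1\})$ is strictly positive, which is exactly the source of the extra factor $(1-\pi_1)$ that distinguishes Theorem \ref{thm: exact convergence rate, discrete} from Theorem \ref{thm: exact convergence speed}.

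The upper bound $\dimh(t) \leq (1 - 1/w^\star)^t$ is already handed to us by Corollary \ref{cor: IMH upper bound}, so the entire work lies in the lower bound. First, I would use the explicit transition kernel \ref{eqn: markov kernel} to compute $P(x_1, \cdot)$ directly. Since $w^\star = w_1$, for every $j \neq 1$ we have $\min\{1, w_j/w_1\} = w_j/w_1$, so
\begin{equation*}
P(x_1, j) = p_j \frac{w_j}{w_1} = \frac{\pi_j}{w^\star},
\end{equation*}
and summing the complementary mass gives $P(x_1, x_1) = 1 - (1-\pi_1)/w^\star = \pi_1/w^\star + (1 - 1/w^\star)$. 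These two identities combine into the clean minorization-with-equality
\begin{equation*}
P(x_1, \cdot) = \frac{1}{w^\star}\pi(\cdot) + \left(1 - \frac{1}{w^\star}\right)\delta_{x_1}(\cdot).
\end{equation*}

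Next, using that $\pi$ is invariant for $P$ (so $\pi P^k = \pi$ for all $k$), I would iterate this identity to show by induction on $n$ that
\begin{equation*}
P^n(x_1, \cdot) = \left(1 - \frac{1}{w^\star}\right)^{\!n}\!\delta_{x_1}(\cdot) + \left(1 - \left(1 - \frac{1}{w^\star}\right)^{\!n}\right)\!\pi(\cdot).
\end{equation*}
The inductive step is routine: apply $P$ to both sides and use $\delta_{x_1} P = P(x_1, \cdot)$ together with $\pi P = \pi$.

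Finally, I would compute the total variation distance. Since $\pi$ is a convex combination of $\delta_{x_1}$ and $\pi$ itself with weights $(1-1/w^\star)^n$ and $1 - (1-1/w^\star)^n$, the difference $P^n(x_1,\cdot) - \pi$ equals $(1-1/w^\star)^n(\delta_{x_1} - \pi)$, whence
\begin{equation*}
\|P^n(x_1, \cdot) - \pi\|_\TV = \left(1 - \frac{1}{w^\star}\right)^{\!n}\|\delta_{x_1} - \pi\|_\TV = \left(1 - \frac{1}{w^\star}\right)^{\!n}(1 - \pi_1).
\end{equation*}
Since $\dimh(t) = \max_x \|P^t(x,\cdot)-\pi\|_\TV \geq \|P^t(x_1,\cdot)-\pi\|_\TV$, the desired lower bound follows. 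No step here should be a serious obstacle; the only subtlety, and the reason Theorem \ref{thm: exact convergence speed} cannot be upgraded to an equality, is that $\|\delta_{x_1} - \pi\|_\TV = 1 - \pi_1 < 1$ in the discrete case, whereas it equals $1$ in the continuous case where $\pi(\{x^\star\}) = 0$.
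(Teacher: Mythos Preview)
Your proposal is correct and follows essentially the same route as the paper's proof: both establish the identity $P(x_1,\cdot)=\frac{1}{w^\star}\pi+(1-\frac{1}{w^\star})\delta_{x_1}$, iterate it using $\pi P=\pi$, and then read off the lower bound from $\|P^t(x_1,\cdot)-\pi\|_\TV=(1-\pi_1)(1-\frac{1}{w^\star})^t$. The only differences are cosmetic---you spell out the derivation of $P(x_1,\cdot)$ from \eqref{eqn: markov kernel} and record the total variation distance as an equality rather than the inequality the paper writes.
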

\begin{remark}
	It is worth mentioning that the upper bound is slightly stronger than the upper bound shown by Liu \cite{liu1996metropolized}, page 117. The lower bound is not included in Liu \cite{liu1996metropolized}.
\end{remark}
\begin{proof}
	The right part of inequality \eqref{eqn: exact convergence rate, discrete} is  proved using  the same coupling strategy as in Theorem \ref{thm: exact convergence speed}, except that the coupling starts at time $0$ instead of time $1$. Let $\Phi$ be the IMH chain initialized at an arbitrary fixed state $x\in\calX$, and $\tilde \Phi$ be the IMH chain starting at stationary. At time $0$, the probability that $\Phi_0 \neq \tilde \Phi_0$ equals $1 - \pi(x)$. Given $\Phi_0 \neq \tilde \Phi_0$,  the coupling is designed in the same way as Theorem \ref{thm: exact convergence speed}, therefore at every step the two chain meets with probability no less than $1 - \frac{1}{w^\star}$. Therefore,  the coupling inequality \eqref{eqn:coupling inequality} shows
		\begin{equation}
	\|P^t(x,\cdot) - \pi\|_\TV \leq \Prob(T \geq t) \leq  (1 - \pi(x))(1 - \frac 1{w^\star})^t.
	\end{equation}
	Taking supremum over $x\in \calX$ on both sides yields
	\[
	\dimh(t) \leq (1 - \inf_{x_i\in \calX} \pi(x_i))   (1- \frac 1{w^\star})^t,
	\]
	which concludes the right part of \eqref{eqn: exact convergence rate, discrete}.

	For the left part, by \eqref{eqn: markov kernel}, we immediately have:
	\[
	P(x_1, \cdot) = (1 - \frac{1}{w(x_1)} ) \delta_{x_1} + \frac 1 {w(x_1)} \pi.
	\]
	
	Therefore, using the fact $\pi P = \pi$, we have:
	\[
	P^t(x_1, \cdot) = (1 - \frac{1}{w(x_1)} )^t \delta_1 + (1 -  (1 - \frac{1}{w_1} )^t)\pi.
	\]
	Then we have
	\begin{align*}
	\dimh(t) & =   \max_{x \in \mathcal X } \|P^t(x,\cdot) - \pi\|_\TV  \geq \|P^t(x_1,\cdot) - \pi\|_\TV \\
	&\geq (1 - \frac{1}{w(x_1)} )^t \|\delta_{x_1} - \pi\|_\TV = (1 - \pi(x_1))(1 - \frac{1}{w(x_1)} )^t,
	\end{align*}
	which concludes the proof.
\end{proof}
The following corollary is immediate:
\begin{corollary}
	The convergence rate $r$  (defined in Definition \ref{def:convergence rate}) at  $x = x_1$ equals $1 - \frac{1}{w^\star}$.
\end{corollary}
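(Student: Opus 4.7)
The plan is to leverage the identity for $P^t(1,\cdot)$ that was already derived inside the proof of Theorem \ref{thm: exact convergence rate, discrete}, since that argument in fact established the two-sided bound as an equality at the state $x = 1$. Because $w$ is non-increasing, $w_1 = w^\star$, so by \eqref{eqn: markov kernel} the first row of the transition matrix simplifies to the clean mixture
\[
P(1, \cdot) = \bigl(1 - \tfrac{1}{w_1}\bigr)\delta_1 + \tfrac{1}{w_1}\pi,
\]
which is the whole reason $x = 1$ has such transparent behavior.

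Next I would iterate. Using $\pi P = \pi$ and a direct induction on $t$, one obtains
\[
P^t(1, \cdot) = \bigl(1 - \tfrac{1}{w_1}\bigr)^t \delta_1 + \Bigl(1 - \bigl(1 - \tfrac{1}{w_1}\bigr)^t\Bigr)\pi.
\]
From this the total variation distance to $\pi$ can be read off exactly, since subtracting $\pi$ leaves $(1 - \tfrac{1}{w_1})^t(\delta_1 - \pi)$, and $\|\delta_1 - \pi\|_\TV = 1 - \pi_1$. Thus for every $t \geq 1$,
\[
\|P^t(1,\cdot) - \pi\|_\TV = (1 - \pi_1)\bigl(1 - \tfrac{1}{w_1}\bigr)^t.
\]

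Finally I would plug this exact expression into Definition \ref{def:convergence rate}. The constant factor $1 - \pi_1$ contributes only an $O(1/t)$ term to $\log\|P^t(1,\cdot)-\pi\|_\TV / t$ and so drops out in the limit, yielding the convergence rate $1 - \tfrac{1}{w_1}$ in the sense used throughout Corollary \ref{cor: bound on convergence rate} and the surrounding discussion. There is no real obstacle here: the corollary is essentially a re-reading of a formula that already fell out of the lower-bound half of Theorem \ref{thm: exact convergence rate, discrete}. The only point worth stressing is that the lower bound in \eqref{eqn: exact convergence rate, discrete} happens to be attained \emph{exactly} at $x = 1$, which is why we can extract a precise limit rather than only inequalities at this particular starting state.
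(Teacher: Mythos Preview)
Your proposal is correct and follows essentially the same route as the paper. The paper's own proof is the single line ``Taking the logarithm on both sides of inequality \eqref{eqn: exact convergence rate, discrete} and the result follows,'' which relies on the fact that the lower bound in that inequality was obtained precisely by computing $\|P^t(1,\cdot)-\pi\|_\TV$, while the upper bound holds for every $x$ and hence for $x=1$; you have simply made this explicit by re-deriving the exact equality $\|P^t(1,\cdot)-\pi\|_\TV = (1-\pi_1)(1-\tfrac{1}{w_1})^t$ before taking the limit.
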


In contrast to Theorem \ref{thm: exact convergence speed}, Theorem \ref{thm: exact convergence rate, discrete} shows the `maximal total variation distance' $\dimh$ has different behaviors between general and discrete state spaces. For chains on general state spaces, $\dimh(t) = (1 - \frac{1}{w_1} )^t$ can be calculated explicitly. For chains on discrete state spaces, however, we can only bound  $\dimh$ between $(1 - \pi(x_1)) (1- \frac 1{w^\star})^t$ and  $(1 - \inf_{x_i\in \calX} \pi(x_i))   (1- \frac 1{w^\star})^t$. The lower and upper bounds coincide only when $x_1 = \arginf_{x_i\in\calX} \pi(x_i)$. An intuitive way of understanding this discrepancy is:  in fact, the proof of Theorem \ref{thm: exact convergence rate, discrete} works  for both discrete and continuous state space. Therefore a common lower bound for $\dimh(t)$ is $(1 - \pi(\{x^\star\})) (1 - \frac{1}{w_1} )^t$ and a common upper bound is 
$
(1 - \inf_{x\in \calX} \pi(\{x\})) (1 - \frac{1}{w_1} )^t.
$
If $\mathcal X$ is a general state space, then any probability density on $\mathcal X$ is atomless. Therefore the lower bound matches the upper bound. However, for discrete state $\mathcal X$, $\pi(\{x^\star\}) = \pi{(x_1)}$  not necessarily equals $\inf_{x\in \calX} \pi(\{x\})$, therefore the bounds may not coincide.

It is worth mentioning that all the results in this subsection still hold (after suitable notation changes) when the state space is countably infinite. All the proofs are essentially the same. 
\subsection{Convergence rate at every point }\label{subsec: converge every point, discrete}
In this section, we focus on proving the following theorem, which shows the convergence rate $r(x)$ equals $1 - \frac 1 {w^\star}$ for every $x\in \mathcal X$. 

\begin{theorem}\label{thm: every point convergence, discrete}
	Let $\Phi$ be an IMH chain on discrete finite state space $\mathcal X = \{x_1, \cdots, x_n \}$. Then for every $x\in \mathcal X$:
	\begin{equation}\label{eqn: every point convergence, discrete}
	r(x) = r_{-}(x) = r^{+}(x) = 1 - \frac {1}{w^\star}. 
	\end{equation}
	In other words, the convergence rate for every $x$ equals $ 1 - \frac {1}{w^\star}$. 
\end{theorem}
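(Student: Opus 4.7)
The upper bound $\|P^n(x, \cdot) - \pi\|_\TV \le (1-1/w^\star)^n$ is immediate from Corollary \ref{cor: IMH upper bound}, so it suffices to establish the matching pointwise lower bound. My plan mirrors the continuous-state argument (Theorem \ref{thm: every point convergence rate}) but is substantially cleaner, because the discrete setting supplies a genuine atom at the maximizer of $w$ that can play the role of the regenerative state without any Lipschitz assumption.

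Order the states so that $w_1 \ge w_2 \ge \cdots \ge w_n$, i.e.\ $w_1 = w^\star$, and write $r \doteq 1 - 1/w^\star$. Two identities drive the argument. First, since $w_1 \ge w_y$ for every $y$, a proposal to state $1$ is always accepted, so $P(y, 1) = p_1$ for every $y \ne 1$; consequently, starting from any $x \ne 1$, the first hitting time $\tau \doteq \min\{n \ge 1 : \Phi_n = 1\}$ is geometric with parameter $p_1$, i.e.\ $\Prob_x(\tau = k) = (1-p_1)^{k-1}p_1$. Second, as shown in the proof of Theorem \ref{thm: exact convergence rate, discrete}, $P(1, \cdot) = r\,\delta_1 + (1/w^\star)\,\pi$, which iterates to $P^n(1, \{1\}) = \pi_1 + (1-\pi_1)r^n$.

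Applying the strong Markov property at $\tau$ for $x \ne 1$ yields
\begin{equation*}
P^n(x, \{1\}) \;=\; \sum_{k=1}^n (1-p_1)^{k-1}\,p_1\,\bigl[\pi_1 + (1-\pi_1)\,r^{n-k}\bigr].
\end{equation*}
Splitting into two geometric sums and using the identity $(1 - p_1) - r = 1/w^\star - p_1 = p_1(1-\pi_1)/\pi_1$ (which follows from $w^\star = \pi_1/p_1$), I expect an exact telescoping cancellation yielding
\begin{equation*}
P^n(x, \{1\}) - \pi_1 \;=\; -\pi_1\, r^n \qquad (x \ne 1,\ n \ge 1),
\end{equation*}
so that together with $P^n(1, \{1\}) - \pi_1 = (1-\pi_1)\,r^n$ one has
\begin{equation*}
\|P^n(x, \cdot) - \pi\|_\TV \;\ge\; |P^n(x, \{1\}) - \pi_1| \;\ge\; \min\{\pi_1,\,1-\pi_1\}\, r^n
\end{equation*}
for every $x \in \mathcal X$. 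This forces $r_-(x) \ge 1 - 1/w^\star$ and, combined with the upper bound, completes the proof.

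I do not anticipate substantive obstacles: the cancellation in the geometric series is elementary, and nothing in the argument requires anything beyond the pointwise structure of the kernel at state $1$. The striking feature — and the reason the discrete proof is simpler than the continuous one — is that $P^n(x, \{1\}) - \pi_1$ turns out to be a pure power of $r$ with no residual $(1-p_1)^n$ term, despite that being the ``natural'' geometric scale suggested by the hitting-time decomposition. Minor degenerate cases ($\pi_1 = 1$, or $p_1 = 0$ which forces $w^\star = \infty$ and places us outside the hypotheses) are handled by inspection.
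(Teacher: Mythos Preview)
Your argument is correct and takes a genuinely different route from the paper's proof. The paper invokes Liu's full spectral decomposition (Theorem \ref{thm: eigen Liu1996}): since the second eigenvector $v_1 = (1-\pi_1, -\pi_1, \ldots, -\pi_1)$ has no zero entries, the $L^2$ spectral expansion gives $\|P^t(x,\cdot)/\pi - 1\|_{2,\pi} \ge |f_1(x)|\,\lambda_1^t$ for every $x$, and this is then downgraded to a TV lower bound via $\|\cdot\|_\TV \ge \tfrac{\pi_\star}{2}\|\cdot\|_{2,\pi}$. Your approach bypasses spectral theory entirely: the hitting time of state $1$ is geometric, the kernel started at $1$ is a mixture of $\delta_1$ and $\pi$, and the two geometric series collapse via the identity $(1-p_1) - r = p_1(1-\pi_1)/\pi_1$ to give the exact formula $P^n(x,\{1\}) - \pi_1 = -\pi_1 r^n$ for $x \ne 1$. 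This yields the cleaner constant $\min\{\pi_1, 1-\pi_1\}$ in place of the paper's $\tfrac{\pi_\star c(\pi)}{2}$, and requires only the one-step structure already used in Theorem \ref{thm: exact convergence rate, discrete}. It is worth noting that the two arguments are secretly the same computation: because $v_k(1) = 0$ for all $k \ge 2$ in Liu's eigenbasis, the quantity $P^n(x,\{1\}) - \pi_1$ is exactly the $\lambda_1$-component of the spectral expansion, which is what your telescoping identity recovers probabilistically.
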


To prove Theorem \ref{thm: every point convergence, discrete}, we first review the following result from Liu \cite{liu1996metropolized}, which gives all the eigenvalues and the corresponding eigenvectors of the Markov transition kernel $P$ (defined in \eqref{eqn: markov kernel}).

\begin{theorem}[Theorem 2.1 of Liu \cite{liu1996metropolized}]\label{thm: eigen Liu1996}
	Let $\Phi$ be the IMH chain with transition kernel $P$ given by \eqref{eqn: markov kernel}. Then all the eigenvalues of the transion matrix are:
	\begin{equation}\label{eqn:IMH eigenvalues}
	\lambda_0 = 1 > \lambda_1 \geq \lambda_2 \geq \cdots \geq \lambda_{n-1}\geq 0,
	\end{equation}
	where 
	\[
	\lambda_k = \sum_{i=k}^n (p(x_i) - \frac{\pi(x_i)}{w(x_k)} ).
	\]
	The right eigenvector corresponding to $\lambda_k$ ($k > 0$) is 
	\begin{equation}\label{eqn:IMH eigenvectors}
	v_k = (0, 0, \cdots, 0, S_\pi(k+1), -\pi(x_k), \cdots, -\pi(x_k))^\intercal,
	\end{equation}
	where there are $k - 1$ zero entries and $S_{\pi}(j) = \pi(x_j) + \pi(x_{j+1}) + \cdots + \pi(x_n)$ for every $j$. 
	Moreover, $\langle v_i, v_j \rangle_\pi= 0$ for $i \neq j$.
\end{theorem}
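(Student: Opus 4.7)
The plan is to verify directly that each proposed pair $(\lambda_k, v_k)$ satisfies $P v_k = \lambda_k v_k$ by explicit component-wise computation, and then use $\pi$-orthogonality of the $v_k$'s together with the trivial eigenpair $(1, \mathbf{1})$ to conclude that the $n$ eigenpairs listed are in fact the complete spectrum.

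The core computation splits into three cases according to the index $i$ of the output component $(Pv_k)_i$. The crucial simplification comes from the assumed ordering $w_1 \geq w_2 \geq \cdots \geq w_n$, which lets the acceptance ratio $\min\{1, w_j/w_i\}$ collapse to $1$ when $j \leq i$ and to $w_j/w_i$ when $j \geq i$; together with the identity $p_i w_i = \pi_i$ this turns off-diagonal entries into $\pi_j/w_i$ and the diagonal entry into $P(i,i) = \sum_{\ell \geq i} p_\ell - w_i^{-1}\sum_{\ell > i}\pi_\ell$. First I would handle $i < k$: since $v_k$ is supported on $\{k, k+1, \ldots, n\}$ and $w_j \leq w_i$ on that set, every relevant $P(i,j)$ equals $\pi_j/w_i$, so the sum telescopes using $S_\pi(k+1) = \sum_{j>k}\pi_j$ and yields $0$. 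Next, for $i > k$, I would carefully expand $(Pv_k)_i = P(i,k)\,S_\pi(k+1) - \pi_k\bigl[\sum_{k<j<i} p_j + P(i,i) + \sum_{j>i} \pi_j/w_i\bigr]$; after substituting the formula for $P(i,i)$, the $\pi_i/w_i$ term kills $p_i$ and the $w_i$-dependent pieces cancel exactly, collapsing the bracket to $\sum_{j\geq k+1} p_j$. Comparing with the expanded form of $-\lambda_k \pi_k$, using $\pi_k/w_k = p_k$ and $\sum_{j\geq k}\pi_j = \pi_k + S_\pi(k+1)$, gives the required identity. The case $i = k$ is the longest but follows the same template: $P(k,k) = \sum_{\ell \geq k}p_\ell - S_\pi(k+1)/w_k$ and the remaining terms combine into $S_\pi(k+1)\sum_{\ell\geq k}(p_\ell - \pi_\ell/w_k) = \lambda_k\,v_k(k)$.

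For the ordering of the eigenvalues, the clean move is to compute the telescoping difference
\begin{equation*}
\lambda_k - \lambda_{k+1} = p_k - \frac{\pi_k}{w_k} + S_\pi(k+1)\Bigl(\frac{1}{w_{k+1}} - \frac{1}{w_k}\Bigr) = S_\pi(k+1)\Bigl(\frac{1}{w_{k+1}} - \frac{1}{w_k}\Bigr) \geq 0,
\end{equation*}
which is non-negative because $w_{k+1} \leq w_k$, and $\lambda_k \geq 0$ follows from writing $\lambda_k = \sum_{j\geq k} p_j(1 - w_j/w_k)$ with each summand non-negative. The strict inequality $\lambda_0 = 1 > \lambda_1 = 1 - 1/w^\star$ is immediate since $w^\star < \infty$ on a finite state space.

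For $\pi$-orthogonality I would take $i<j$ and note that $v_i(\ell)v_j(\ell)$ vanishes unless $\ell \geq j$, so the inner product reduces to $-\pi_i\pi_j\bigl[S_\pi(j+1) - \sum_{\ell > j}\pi_\ell\bigr] = 0$, and the check against the trivial eigenvector $v_0 = \mathbf{1}$ gives $\langle \mathbf{1}, v_k\rangle_\pi = \pi_k S_\pi(k+1) - \pi_k S_\pi(k+1) = 0$. Mutual $\pi$-orthogonality together with $v_k \neq 0$ implies linear independence, so we have produced $n$ independent eigenvectors and the spectrum is exhausted. The main obstacle I expect is not any single step but the bookkeeping in the $i>k$ case: one has to keep track of three disjoint index ranges ($k < j < i$, $j = i$, $j > i$) each contributing under a different rule for the acceptance ratio, and it is easy to lose a term or misplace a $w_i$ versus $w_k$. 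Writing $P(i,i)$ in the grouped form $\sum_{\ell\geq i} p_\ell - w_i^{-1}\sum_{\ell > i}\pi_\ell$ before substituting is the key trick that makes every $w_i$-dependent piece cancel, leaving only $w_k$ in the final expression.
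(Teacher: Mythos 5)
Your proposal is correct, but it takes a genuinely different route from the paper. You verify the claimed eigenpairs directly: a component-wise computation of $(Pv_k)_i$ in the three regimes $i<k$, $i=k$, $i>k$ (all of which check out — the key cancellations you describe, e.g.\ the bracket in the $i>k$ case collapsing to $\sum_{j>k}p_j$ and the identity $\pi_k/w_k=p_k$, are exactly right), then establish the ordering via the telescoping difference $\lambda_k-\lambda_{k+1}=S_\pi(k+1)\bigl(w_{k+1}^{-1}-w_k^{-1}\bigr)\geq 0$ and nonnegativity via $\lambda_k=\sum_{j\geq k}p_j(1-w_j/w_k)$, and finally use mutual $\pi$-orthogonality of the $n$ nonzero vectors $\mathbf{1},v_1,\dots,v_{n-1}$ to conclude the spectrum is exhausted. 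The paper instead proves a reusable rank-one perturbation lemma and writes $P=D+ep^\intercal$ with $D$ upper triangular having diagonal $(\lambda_1,\dots,\lambda_{n-1},0)$ and $e=(1,\dots,1)^\intercal$ an eigenvector of $D$ for eigenvalue $0$; the eigenvalues are then read off the diagonal of $D$ and the eigenvectors fall out of the lemma's update formula. The paper's decomposition is slicker and actually \emph{derives} the spectrum rather than certifying a guessed answer, and the lemma is of independent use; your argument is more elementary and self-contained (no auxiliary lemma, no appeal to the triangular structure), at the cost of heavier bookkeeping, and it has the advantage of supplying the monotonicity $\lambda_1\geq\cdots\geq\lambda_{n-1}\geq 0$ and the orthogonality claim explicitly, which the paper's two-line proof leaves implicit. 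Both are complete proofs of the statement.
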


Now we are ready to prove Theorem \ref{thm: every point convergence, discrete}.

\begin{proof}[Proof of Theorem \ref{thm: every point convergence, discrete}]
	Let $f_k$ be the normalized eigenfunction of $v_k$, that is:
	\[
	f_k = v_k/\langle v_k, v_k\rangle_\pi.
	\]
	Then the spectral theorem of the transition matrix gives us (see 
	Lemma 12.16 of Levin et al. \cite{levin2017markov} for a proof):
	\begin{equation}\label{eqn: l2 bound}
	\bigg\lVert \frac{P^t(x, \cdot)}{\pi(\cdot)} - 1\bigg\rVert_{2,\pi}^2 = \sum_{k = 1}^{m-1} f_k(x)^2 \lambda_k^{2t}\geq f_1(x)^2 \lambda_1^{2t}
	\end{equation}
Notice that $\lambda_1 = 1 - \frac 1 {w_(x_1)} = 1 - \frac 1{w^\star}$ and $v_1 = (1-\pi(x_1), -\pi(x_1), \cdots, -\pi(x_1))$ does not have any zero entry. We conclude
\begin{equation}\label{eqn:l2 convergence rate}
\bigg\lVert \frac{P^t(x, \cdot)}{\pi(\cdot)} - 1\bigg\rVert_{2,\pi}  \geq c(\pi) (1 - \frac 1{w^\star})^t
\end{equation}
for every $x$, where $c(\pi)$ is a positive constant depending only on $\pi$. 
Now it remains to prove the  $L_1$ convergence rate  equals $1 - \frac 1 {w^\star}$ for every $x$. Notice that:
\begin{equation}\label{eqn: inequality l1 vs l_infty}
\|P^t(x ,\cdot) - \pi\|_\TV = \frac 12 \lVert \frac{P^t(x, \cdot)}{\pi(\cdot)} - 1\rVert_{1,\pi}  =  \frac 12 \sum_{y\in \mathcal X} \bigg\lvert\frac{P^t(x, y)}{\pi(y)} - 1\bigg\rvert\pi(y) \geq \frac{\pi_\star}{2}  \max_y  \bigg\lvert \frac{P^t(x, y)}{\pi(y)} - 1\bigg\rvert,
\end{equation}
where $\pi_\star \doteq \min_y \pi(y)$, 
and 
\begin{equation}\label{eqn: inequality l2 vs l_infty}
\bigg\lVert \frac{P^t(x, \cdot)}{\pi(\cdot)} - 1\bigg\rVert_{2,\pi}  =\sqrt{ \sum_{y\in \mathcal X} \bigg(\frac{P^t(x, y)}{\pi(y)} - 1\bigg)^2\pi(y)} \leq  \max_y  \bigg\lvert \frac{P^t(x, y)}{\pi(y)} - 1\bigg\rvert
\end{equation}

Combining \eqref{eqn:l2 convergence rate}, \eqref{eqn: inequality l1 vs l_infty}, and \eqref{eqn: inequality l2 vs l_infty}, we have:
\begin{equation}\label{eqn: l1 convergence rate}
\|P^t(x ,\cdot) - \pi\|_\TV \geq  \frac{\pi_\star}{2}  	\bigg\lVert \frac{P^t(x, \cdot)}{\pi(\cdot)} - 1\bigg\rVert_{2,\pi} \geq   \frac{\pi_\star c(\pi)}{2}  (1 - \frac 1{w^\star})^t
\end{equation}
for any $x\in \mathcal X$. 

Formula \eqref{eqn: l1 convergence rate} implies
\[
r_{-}(x) \geq 1 - \frac 1{w^\star}.
\]
Since Theorem \ref{thm: exact convergence rate, discrete} gives us $r^{+}(x) \leq 1 - \frac 1{w^\star}$, we have $r(x) =  1 - \frac 1{w^\star}$, as desired.
\end{proof}

Theorem \ref{thm: every point convergence rate} and Theorem \ref{thm: exact convergence rate, discrete} shows that  the IMH chain has the same convergence rate for every initialization $x\in \mathcal X$, regardless of the sample space is continuous or discrete. This answers Q2 in Section \ref{sec:introduction}.

\section{Connections with rejection sampling and coupling from the past}\label{sec: connections}
\subsection{Comparison between the IMH and the rejection sampler}
Our assumption $w^\star = \sup_x \frac{\pi(x)}{p(x)} < \infty$
implies the target distribution $\pi$ is uniformly enveloped by the function $w^\star p(x)$. Therefore one can directly implement the rejection sampling algorithm to draw $i.i.d.$ samples from $\pi$ as follows. At each step, one samples $X\sim p$ and $U\sim \Unif[0,1]$. Then the new sample is accepted if $U \leq 
\frac{\pi(X)}{w^\star p(X)}$, and is discarded otherwise. Running the rejection sampling algorithm $T$ times will return $N_T$ accepted samples which are $i.i.d.$ distributed from $\pi$. It can be shown  that each time the acceptance probability is precisely $\frac{1}{w^\star}$ and therefore $N_T$ follows a Binomial distribution with $T$ trails  and success probability $\frac{1}{w^\star}$. Suppose we are interested in estimating $\mu = \bE_\pi(h(X))$. Let $X_1, \cdots, X_{N_T}$ be the number of accepted samples by running the rejection sampler $T$  times and $Y_1, \cdots, Y_T$ be the samples by running the IMH algorithm for $T$ steps. Then we have the following two natural estimators, denoted by $\murej$ and $\muimh$ respectively:
\begin{align}\label{eqn:estimator}
\murej(T)\doteq \frac{\sum_{i=1}^{N_T}h(X_i)}{N_T} \qquad \muimh(T)\doteq \frac{\sum_{i=1}^{T}h(Y_i)}{T}.
\end{align}

It is clear that the rejection sampler gives less but $i.i.d.$ samples, while the IMH algorithm gives more correlated samples.  Liu \cite{liu1996metropolized} compared the efficiency of both estimators in \eqref{eqn:estimator} when the state space is finite.  Liu proved the asymptotic efficiency of $\muimh$ is comparable with (at most half of)  that of $\murej$ (see page 117--118 of \cite{liu1996metropolized}) by exact eigenanalysis. We extend the previous study to general state space IMH algorithms. 

Following Liu \cite{liu1996metropolized}, we also compare the two estimators in terms of their asymptotic variances, which is defined as:
\[
\sigma^2(\murej) \doteq \lim_{T\rightarrow\infty}  \var(\sqrt{T}\murej(T))
\]
and 
\[
\sigma^2(\muimh) \doteq \lim_{T\rightarrow\infty}  \var(\sqrt{T}\muimh(T))\footnote{It is assumed that the chain starts at the stationary distribution, i.e., $Y_1\sim \pi$.}.
\]

The asymptotic variance of $\murej$ is:
\[
\sigma^2(\murej) = \lim_{T\rightarrow \infty} T \cdot \var(\murej(T)) = \lim_{T\rightarrow \infty} \var_\pi(h)\cdot T\cdot  \bE(\frac{1}{N_T}) = w^\star \var_\pi(h),
\]
where the last equality follows from the law of large numbers.

The asymptotic variance of $\muimh$ can be calculated using the following formula on page 751 of Tan \cite{tan2006monte}:
\[
\sigma^2(\muimh)  = \int_{x\in\calX} \tilde h^2(x) \frac{1 + R(x)}{1 - R(x)} \pi(x) dx,
\]
where $\tilde h = h - \bE_\pi(h)$, $R(x)$ is the rejection probability at state $x$.  Since we know $R(x) \leq  1- \frac{1}{w^\star}$  and $\frac{1 + R(x)}{1 - R(x)}\geq 1$ for every $x\in \calX$, we conclude
\begin{align}\label{eqn: asymptotic variance imh}
\var_\pi(h) \leq \sigma^2(\muimh) \leq (2w^\star - 1) \var_\pi(h).
\end{align}
Therefore,  the asymptotic variance for the IMH algorithm is always no less than $i.i.d.$ samples from $\pi$ and is comparable to the asymptotic variance of the rejection sampler. An alternative way of proving \eqref{eqn: asymptotic variance imh} is to use the spectral measure representation of the asymptotic variance. We skip the details here and refer the readers to Geyer \cite{geyer1992practical} for further discussions.

In practice, however, the asymptotic variance of the rejection sampler can be much higher than $w^\star$, as $w^\star$ is the best possible enveloping constant and is usually intractable. Choosing a large enveloping constant will result in a smaller acceptance probability and thus a higher asymptotic variance. In contrast, implementing the IMH algorithm does not require the knowledge of $w^\star$.

\subsection{Connections with coupling from the past}
The condition $w^\star < \infty$  also allows one to apply the coupling from the past (CFTP) algorithm to draw samples from $\pi$ directly. The CFTP algorithm is first proposed by Propp and Wilson \cite{propp1996exact} and has then been a very active area for more than twenty years. The idea of CFTP relies on the stochastic recursion representation of Markov chains. That is, given current state $X_t$, the next state $X_{t+1}$ can be represented by $X_{t+1} = \phi(X_t, U_t)$ where $\phi$ is a deterministic function and $U_t$ are random numbers independent of all the other $X_s$ and $U_s$. In the context of IMH algorithms, $U_t$ can be chosen as a tuple $(U_{t,1}, U_{t,2})$ where $U_{t,1}\sim p$ is the proposed new state and $U_{t,2}\sim \Unif[0,1]$, $\phi(X_t, U_t)$ equals $U_{t,1}$ if $U_{t,2} \leq \frac{\pi(U_{t,1})p(X_t)}{\pi(X_{t})p(U_{t,1})}$ and equals $X_t$ otherwise. Moreover, drawing a sample from $\pi$ is conceptually equivalent to drawing a sample from the IMH chain after running it for infinitely many steps. Suppose we start  many Markov chains  which are initialized at every $x\in \calX$ at time $-\infty$, and  each chain uses the same bit of randomness (the same $U_t$) to update the move for each $t$. We can show that all the paths from different initializations will eventually coalesce into one almost surely, and the sample at time $0$ is distributed according to $\pi$. CFTP can be used for IMH algorithms since $x^\star$ (assume it exists) plays a special role in the updates. For any state $x\in \cal X$ and any proposal $y\in \calX$, the acceptance ratio always satisfies $\frac{w(y)}{w(x)}\geq \frac{w(y)}{w(x^\star)}$. In other words, given any proposed move for all the chains, if the chain at $x^\star$ agrees to move, all the paths merge into one simultaneously. The algorithm is described in page 493 of Murdoch and Green \cite{murdoch1998exact} and in page 303 of Corcoran and Tweedie \cite{corcoran2002perfect}, using slightly different languages. We also refer the readers to  \cite{wang2021regeneration}  for recent developments. As noticed by Murdoch and Green \cite{murdoch1998exact},  the first time that an IMH chain starting at $x^\star$ makes a move is the same as the first time one gets a sample from rejection sampling. Thus the CFTP algorithm for the IMH is simply the rejection sampler.

\section*{Acknowledgement}
The author would like to thank Richard Smith and Persi Diaconis for helpful discussions, and Jeffrey Rosenthal for pointing out some useful references, and David Sichen Wu and Zhengqing Zhou for helpful suggestions in improving this paper.  The author would like to thank the Editor, the Associate Editor, and two referees for many very helpful suggestions.
\newpage
\bibliographystyle{alpha}
\bibliography{mybib}

\end{document}